\newcommand{\eqdef}{\stackrel{\rm{def}}{=}}
\newcommand{\emptyslot}{\sqcup}
\newcommand{\bb}[1][n]{\mathcal{B}_{#1}}
\newcommand{\cc}[1][n]{\{0,1\}^{#1}}
\newcommand{\markedcube}[1][n]{\{0,1,\hat{0}, \hat{1}\}^{#1}}
\newcommand{\regcube}[1][n]{\{0,1,\emptyslot\}^{#1}}
\newcommand{\floor}[1]{{\lfloor#1\rfloor}}
\newcommand{\ceil}[1]{{\lceil#1\rceil}}
\newcommand{\dist}{{\sf distance}}
\newcommand{\maj}{{\sf majority}}
\newcommand{\parity}{{\sf parity}}
\newcommand{\even}{{\sf even}}
\newcommand{\maxstretch}{{\sf maxStretch}}
\newcommand{\avgstretch}{{\sf avgStretch}}
\newcommand{\poly}{{\rm poly}}
\renewcommand{\P}{\ensuremath{\mathcal{P}}}
\newcommand{\N}{{\mathbb N}}
\newcommand{\R}{{\mathbb R}}
\newcommand{\eps}{\varepsilon}
\newcommand{\seq}{\subseteq}
\newcommand{\half}{\frac{1}{2}}
\newcommand{\AC}[1]{\mathbf{AC^{#1}}}
\newcommand{\NC}[1]{\mathbf{NC^{#1}}}
\newcommand{\TC}[1]{\mathbf{TC^{#1}}}
\newcommand{\und}{\underline}
\newcommand{\markhat}{{\sf mark}}
\renewcommand{\int}{{\sf int}}
\newcommand{\D}{{\mathcal D}}
\newcommand{\etal}{et~al.}
\def\OR{\ensuremath{\mathsf{OR}}}
\newcommand{\f}{\psi}
\newcommand{\h}{\phi}
\newcommand{\Inf}{\mathbf{Inf}}
\newcommand{\Ex}{\mathop{{\bf E}\/}}
\renewcommand{\Pr}{\mathop{\bf Pr\/}}
\newtheorem{theorem}{Theorem}
\newtheorem{corollary}{Corollary}[section]
\newtheorem{proposition}[corollary]{Proposition}
\newtheorem{definition}[corollary]{Definition}
\newtheorem{claim}[corollary]{Claim}
\newtheorem{remark}[corollary]{Remark}
\newtheorem{problem}[corollary]{Problem}
\newtheorem{challenge}[corollary]{Challenge}
\begin{document}

\title{Bi-Lipschitz Bijection between the Boolean Cube \\ and the Hamming Ball}

\newcommand*\samethanks[1][\value{footnote}]{\footnotemark[#1]}
\author{Itai Benjamini\thanks{
    Faculty of Mathematics and Computer Science,
    Weizmann Institute of Science, Rehovot, {\sc Israel}.
    {\tt\{itai.benjamini,gil.cohen,igor.shinkar\}@weizmann.ac.il.}}
  \and
  Gil Cohen\samethanks
  \and
  Igor Shinkar\samethanks
}


\maketitle

\begin{abstract}
We construct a bi-Lipschitz bijection from the Boolean cube to the Hamming ball of equal volume.
More precisely, we show that for all even $n \in \N$ there exists an explicit bijection
$\f \colon \{0,1\}^n \to \left\{ x \in \{0,1\}^{n+1} \colon |x| > n/2 \right\}$
such that for every $x \neq y \in \cc$ it holds that
\[
\frac{1}{5}\leq \frac{\dist(\f(x),\f(y))}{\dist(x,y)} \leq 4,
\]
where $\dist(\cdot,\cdot)$ denotes the Hamming distance.
In particular, this implies that the Hamming ball is bi-Lipschitz transitive.

\medskip

This result gives a strong negative answer to an open problem of Lovett and Viola~[CC 2012],
who raised the question in the context of sampling distributions in low-level complexity classes.
The conceptual implication is that the problem of proving lower bounds in the context of
sampling distributions will require some new ideas beyond the
sensitivity-based structural results of Boppana~[IPL 97].

\medskip

We study the mapping $\f$ further and show that it (and its inverse) are computable in DLOGTIME-uniform $\TC{0}$, but not in $\AC{0}$. Moreover, we prove that $\f$ is ``approximately local'' in the sense that all but the last output bit of $\f$ are essentially determined by a single input bit.


\end{abstract}

\thispagestyle{empty}
\pagebreak

\newpage
\thispagestyle{empty}
\small
\hypersetup{linkcolor=black}
\tableofcontents
\normalsize
\newpage
\hypersetup{linkcolor=blue}
\pagenumbering{arabic}

\section{Introduction}\label{sec:intro}

The Boolean cube $\cc$ and the Hamming ball $\bb = \{x \in \cc[n+1] \colon |x|>n/2\}$, equipped with the Hamming distance, are two fundamental combinatorial
structures that exhibit, in some aspects, different geometric properties.
As a simple illustrative example, for an even integer $n \in \N$, consider
the vertex and edge boundaries%
\footnote{The edge boundary of a subset $A\subset \cc[n+1]$ is set of
edges with one endpoint in $A$ and one outside $A$.
The vertex boundary of $A$ is the set of vertices outside $A$ that are endpoints of boundary edges.}
of $\cc$ and $\bb$, when viewed as subsets of $\cc[n+1]$ of equal density $1/2$.
The Boolean cube is easily seen to maximize vertex boundary among all subsets of equal density (since all its vertices lie on the boundary), whereas Harper's vertex-isoperimetric inequality~\cite{Harper66} implies that the Hamming ball is in fact the unique minimizer. The same phenomena occurs for edge boundary, though interestingly, the roles are reversed:  among all monotone sets%
\footnote{Recall that a subset $A\subset \cc[n+1]$ is \emph{monotone} if $x \in A$ implies $y \in A$ for all $y\succeq x$.} of density $1/2$, the Poincar\'e inequality implies that the Boolean cube is the unique minimizer of edge boundary, whereas a classical result of Hart shows that the Hamming ball is the unique maximizer~\cite{Hart76}. From the Boolean functions perspective, the indicator of $\cc$ embedded in $\cc[n+1]$ is commonly referred to as the \emph{dictator} function, and the indicator of $\bb\subset \cc[n+1]$ is the \emph{majority} function, and it is a recurring theme in the analysis of Boolean functions that they are, in some senses, opposites of one another.

Lovett and Viola~\cite{LV11} suggested to utilize the opposite structure of the Boolean cube and the Hamming ball for proving lower bounds on sampling by low-level complexity classes such as $\AC{0}$ and $\TC{0}$. In particular, Lovett and Viola were interested in proving that for any even $n$, any bijection $f \colon \cc \to \bb$ has a large average stretch, where
\[
\avgstretch(f) = \Ex_{\substack{x \sim \cc \\ i \sim  [n]}} \left[ \dist(f(x),f(x+e_i)) \right],
\]
and $\dist(\cdot,\cdot)$ denotes the Hamming distance.
To be more precise, Lovett and Viola raised the following open problem.
\begin{problem}[{\cite{LV11}, Open Problem 4.1}]\label{prob:dict to maj}
Let $n \in \N$ be an even integer. Prove that for any bijection
$f \colon \cc \to \bb$, it holds that
\begin{equation}\label{eq:superpolylog stretch}
\avgstretch(f) = (\log{n})^{\omega(1)}.
\end{equation}
\end{problem}
\noindent
A positive answer to Problem \ref{prob:dict to maj} would demonstrate yet
another scenario in which the Boolean cube and the Hamming ball have a different
geometric structure -- any bijection from the former to the latter does not
respect distances. Furthermore, a positive answer to Problem~\ref{prob:dict to maj}
would have applications to lower bounds for sampling in $\AC{0}$; even a weaker
claim, where the right hand side in Equation~\eqref{eq:superpolylog stretch}
is replaced with $\omega(1)$, would have implications for sampling in the lower
class $\NC{0}$. We discuss this further in Section~\ref{sec:motivation}.

Arguably, the simplest and most natural bijection $\varphi \colon \cc \to \bb$ to consider is the following.
\begin{equation*}
\varphi(x) = \left\{
\begin{array}{ll}
 {\sf flip}(x) \circ 1 & \text{\quad if $|x| \le n/2$}  \\
 x \circ 0 & \text{\quad otherwise},
\end{array}
\right.
\end{equation*}
where ${\sf flip}(x)$ denotes the bit-wise complement of $x$.  It is straightforward to verify that $\avgstretch(\varphi) = \Theta(\sqrt{n})$. To see this, note that any edge $(x,y)$ in $\cc$, where $|x| = n/2$ and $|y| = n/2+1$, contributes $n$ to the average stretch, whereas all other edges contribute $1$. The assertion then follows since $\Theta(1/\sqrt{n})$ fraction of the edges are of the first type. In fact, the maximum stretch of $\varphi$ is $n$, where
\[
\maxstretch(\varphi) = \max_{\substack{x \in \cc \\ i \in  [n]}} \dist(\varphi(x),\varphi(x+e_i)).
\]
As far as we know, prior to our work this simple bijection achieved the best-known upper bound on the average stretch between $\cc$ and $\bb$, and no non-trivial upper bounds ({\it i.e.} sublinear) on maximum stretch were known.
For a survey on metric embeddings of finite spaces see~\cite{Li02}.
In particular, a lot of research has been done on the question of embedding
into the Boolean cube. For example, see~\cite{AB07,HLN87} for some work on
embeddings between random subsets of the Boolean cube,
and~\cite{G88} for isometric embeddings of arbitrary graphs into the Boolean cube.

\subsection{Our Results}
The main result of this paper is a strong \emph{negative} answer to
Problem~\ref{prob:dict to maj}.
\begin{theorem}[Main theorem]\label{thm:main}
For all even integers $n$, there exists a bijection $\f \colon \cc \to \bb$ with
\[
\maxstretch(\f) \le 4
\]
and
\[
\maxstretch(\f^{-1}) \le 5.
\]
\end{theorem}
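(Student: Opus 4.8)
The plan is to exhibit one explicit bijection $\f\colon\cc\to\bb$ and then verify the two bounds \emph{separately} and essentially by brute force. Since $\cc$ and $\bb$ are connected graphs and $\maxstretch$ is by definition a maximum over edges, bounding $\dist(\f(x),\f(x+e_i))$ over all cube edges and $\dist(\f^{-1}(u),\f^{-1}(u+e_i))$ over all ball edges gives the full Lipschitz constants in both directions (and hence the two-sided bound in the abstract, by the triangle inequality along shortest paths). So the argument decomposes into: (i) writing $\f$ down explicitly; (ii) checking it is a bijection; (iii) a finite case analysis of the local behaviour of $\f$; and (iv) the dual case analysis for $\f^{-1}$.

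The hard and creative part is (i), designing $\f$; steps (ii)--(iv) are then technical but routine. The natural first attempts all fail with $\dist$-stretch $\Theta(n)$. For example, appending a bit and complementing the light half --- $x\mapsto \overline{x}\circ 1$ when $|x|\le n/2$ and $x\mapsto x\circ 0$ otherwise --- is a bijection, but two neighbours straddling the weight-$n/2$ layer land at distance $n$, because the entire correction happens discontinuously at one layer. Likewise, the ``obvious'' layer foldings (match the weight-$k$ layer of $\cc$ with the weight-$(n{+}1{-}k)$ layer of $\bb$ and use complementation on the light half) are exact bijections but send $0^n\mapsto 1^{n+1}$ and $1^n\mapsto 1^n\circ 0$, two adjacent points of $\bb$ whose preimages sit at distance $n$. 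Thus $\f$ must be arranged so that the set of coordinates that get flipped is \emph{diffused} across all weight layers rather than concentrated at the threshold, and so that no ``extreme'' point is handled by an isolated ad hoc rule. Concretely one wants the flip pattern at each coordinate to be governed by some running statistic of $x$ (e.g.\ its prefix sums $x_1+\dots+x_j$), so that a single-bit change to the input perturbs that statistic only locally and hence changes only a bounded number of output bits --- which is also what makes the map ``approximately local.'' We give such a $\f$ explicitly, passing if convenient through an intermediate encoding on a larger alphabet that records which bits were flipped (the marked-cube picture).

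Once $\f$ is pinned down, step (ii) is a counting/inversion check. For (iii), given an edge $\{x,x+e_i\}$ of $\cc$, the running statistics of $x$ and $x+e_i$ agree outside a single contiguous block of indices, where they differ by exactly $1$; one tracks which output coordinates can therefore change and which thresholds can be crossed, and a short case split (on $|x|$ relative to $n/2$, and on whether the perturbed coordinate lies before or after the relevant crossing) bounds the number of altered output coordinates by $4$. Step (iv) is the same bookkeeping performed from the ball side for $\f^{-1}$, with the bound coming out as $5$ rather than $4$ essentially because of the extra $(n{+}1)$-st coordinate and an off-by-one in the threshold that $\f^{-1}$ has to detect. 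I expect the only real obstacle to be (i): finding a closed-form $\f$ that is simultaneously a bijection, local enough that one input flip disturbs only $O(1)$ output bits, and symmetric enough that the same holds for its inverse; the verification in (ii)--(iv), though lengthy, is conceptually straightforward.
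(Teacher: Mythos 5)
There is a genuine gap: your step (i) --- the explicit construction of $\f$ --- is never carried out, and it is precisely where all the mathematical content of the theorem lives. You correctly diagnose why the naive bijections fail (the correction is concentrated at the weight-$n/2$ threshold), you correctly identify the properties $\f$ must have (the flipped coordinates must be diffused across layers, governed by some local statistic of $x$), and you even name a plausible intermediate object (an encoding over a larger alphabet recording which bits are flipped). But a generic ``prefix-sum threshold'' rule does \emph{not} work: flipping one input bit shifts every prefix sum $x_1+\cdots+x_j$ for $j\ge i$ by $1$, so arbitrarily many output coordinates could cross a fixed threshold, and nothing in your sketch prevents this. The paper's actual construction rests on a specific combinatorial device you do not supply: the De~Bruijn--Tengbergen--Kruyswijk partition of $\cc$ into monotone symmetric chains, computed by iteratively matching and marking consecutive $10$ pairs (a balanced-parentheses matching). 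The map sends each vertex halfway up its chain and uses the $(n{+}1)$-st bit to resolve the resulting two-to-one collision. Locality then follows because flipping one input bit leaves the matching on the prefix and suffix unchanged, so the two inputs reduce to $0^a1^b\circ 0\circ 0^c1^d$ versus $0^a1^b\circ 1\circ 0^c1^d$ on their unmarked coordinates, and a finite case analysis on $a,b,c,d$ yields the constants $4$ and $5$. Without an actual $\f$ in hand, your steps (ii)--(iv) cannot be executed and the constants cannot be verified; the claim that they are ``routine'' is only true once the right construction is fixed, and finding it is the theorem.

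A secondary inaccuracy: your explanation of why the inverse direction gives $5$ rather than $4$ (``the extra $(n{+}1)$-st coordinate'') does not match the actual source of the worst case. In the paper's analysis, an edge of $\bb$ along the $(n{+}1)$-st coordinate is the \emph{easy} case (its endpoints have preimages that are consecutive on a chain, hence at distance $1$); the bound of $5$ arises from a specific subcase of the marking analysis where the chain of $x+e_i$ is longer than that of $x$ and the preimage must descend correspondingly further. This is a minor point next to the missing construction, but it shows the case analysis is not as mechanical as the proposal suggests.
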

\noindent
Theorem~\ref{thm:main} highlights a surprising geometric resemblance between
the Boolean cube and the Hamming ball.
In the language of metric geometry, Theorem~\ref{thm:main} says that there
is a bi-Lipschitz bijection between the two spaces.
\begin{corollary}[A bi-Lipschitz bijection between $\cc$ and $\bb$]
For all even integers $n$, there exists a bijection $\f\colon \cc \to \bb$, such that
for every $x \neq y \in \cc$ it holds that
\[
    \frac{1}{5}\leq \frac{\dist(\f(x),\f(y))}{\dist(x,y)} \leq 4.
\]
\end{corollary}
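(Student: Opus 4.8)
The plan is to deduce the corollary directly from Theorem~\ref{thm:main}, using the standard device that turns a bound on the stretch of single coordinate flips into a bound on the distortion of arbitrary pairs, via path-chasing and the triangle inequality for the Hamming metric.

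First I would establish the upper bound $\dist(\f(x),\f(y)) \le 4\,\dist(x,y)$ for every $x \ne y \in \cc$. Writing $d = \dist(x,y)$, walk from $x$ to $y$ in the Boolean cube one coordinate at a time, producing $x = \xto{0},\xto{1},\dots,\xto{d} = y$ with $\dist(\xto{i-1},\xto{i}) = 1$ for all $i$. By the triangle inequality and Theorem~\ref{thm:main}, $\dist(\f(x),\f(y)) \le \sum_{i=1}^{d}\dist(\f(\xto{i-1}),\f(\xto{i})) \le d\cdot\maxstretch(\f) \le 4d$, which is the right-hand inequality of the corollary.

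For the left-hand inequality I would run the symmetric argument for $\f^{-1}$, and this is the one step that needs genuine care — the place where I expect the (minor) obstacle to sit. Given $u = \f(x)$ and $v = \f(y)$ in $\bb$, I cannot simply chain $\maxstretch(\f^{-1}) \le 5$ along an arbitrary geodesic from $u$ to $v$ in $\cc[n+1]$, since such a geodesic may leave the ball, whereas $\f^{-1}$ is defined only on $\bb$. The remedy is to note that between any two points of $\bb$ there is a geodesic lying entirely in $\bb$: order the coordinates on which $u$ and $v$ differ so that every $0\to1$ flip precedes every $1\to0$ flip; along the resulting path of length $\dist(u,v)$ the Hamming weight first increases and then decreases, so it never falls below $\min(|u|,|v|) \ge n/2+1$, and every intermediate point stays in $\bb$. (In particular, the metric that $\bb$ inherits from $\cc[n+1]$ equals the shortest-path metric of the induced subgraph, which is exactly what makes $\maxstretch(\f^{-1})$ the relevant quantity here.) Fixing such an in-ball path $u = u^{(0)}, u^{(1)},\dots,u^{(d')} = v$ with $d' = \dist(u,v)$, the triangle inequality together with $\maxstretch(\f^{-1}) \le 5$ gives $\dist(x,y) = \dist(\f^{-1}(u),\f^{-1}(v)) \le \sum_{i=1}^{d'}\dist(\f^{-1}(u^{(i-1)}),\f^{-1}(u^{(i)})) \le 5d' = 5\,\dist(\f(x),\f(y))$, that is, $\dist(\f(x),\f(y)) \ge \frac{1}{5}\dist(x,y)$. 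Combining the two bounds yields the corollary with the constants $4$ and $\frac{1}{5}$ inherited from Theorem~\ref{thm:main}.
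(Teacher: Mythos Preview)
Your proof is correct and matches the paper's approach: the corollary is stated there as an immediate consequence of Theorem~\ref{thm:main}, with no separate argument given. The one nontrivial point you single out---that geodesics between points of $\bb$ may be taken entirely inside $\bb$---is precisely the convexity of $\bb$ in $\cc[n+1]$, which the paper invokes explicitly in the proof of the next corollary (Corollary~\ref{cor:coarse transitivity}).
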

\noindent
As a corollary from Theorem~\ref{thm:main}, we obtain that the subgraph
of $\{0,1\}^{n+1}$ induced by the vertices of $\bb$ is \emph{bi-Lipschitz transitive}. Informally speaking, this says that any two points in $\bb$ have roughly the same ``view'' -- even the unique point with Hamming weight $n+1$
and the boundary points which have weight $n/2+1$. More formally,
\begin{corollary}[The Hamming balls are uniformly bi-Lipschitz transitive]
\label{cor:coarse transitivity}
For all even integers $n$, and for every two vertices $x,y \in \bb$ there is
a bijection $f \colon \bb \to \bb$ such that $f(x)=y$, $f(y) = x$, and
for every $z \neq w \in \bb$, it holds that
\[
    \frac{1}{20} \leq \frac{\dist(f(z),f(w))}{\dist(z,w)} \leq 20.
\]
\end{corollary}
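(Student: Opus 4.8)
The plan is to transport the very strong transitivity of the Boolean cube onto the Hamming ball through the bi-Lipschitz bijection $\f \colon \cc \to \bb$ provided by Theorem~\ref{thm:main}. The key point is that $\cc$ is transitive in the strongest possible sense: for any $a,b \in \cc$ the translation $\tau \colon \cc \to \cc$ given by $\tau(z) = z + (a+b)$ (coordinate-wise addition modulo $2$) is a Hamming isometry, and in fact an involution, with $\tau(a)=b$ and $\tau(b)=a$.

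Given $x,y \in \bb$, I would set $a = \f^{-1}(x)$, $b = \f^{-1}(y)$, and define the conjugate map
\[
f \;=\; \f \circ \tau \circ \f^{-1} \colon \bb \to \bb .
\]
As a composition of bijections, $f$ is a bijection of $\bb$; moreover $f(x) = \f(\tau(a)) = \f(b) = y$, and since $\tau$ is an involution $f(y) = \f(\tau(b)) = \f(a) = x$. So the only thing left is the distortion estimate. Here I would first record the global form of Theorem~\ref{thm:main}, namely that $\tfrac{1}{5}\,\dist(u,v) \le \dist(\f(u),\f(v)) \le 4\,\dist(u,v)$ for all $u,v\in\cc$ (this is the corollary just above; it follows from the per-edge \maxstretch\ bounds by telescoping along a shortest path, using that the Hamming ball is geodesically convex inside $\{0,1\}^{n+1}$, so that a shortest path between two points of $\bb$ can be chosen to stay within $\bb$). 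Then, for any $z \neq w \in \bb$, since $\tau$ preserves Hamming distance,
\begin{align*}
\dist(f(z),f(w)) &= \dist\!\big(\f(\tau(\f^{-1}(z))),\,\f(\tau(\f^{-1}(w)))\big)\\
&\le 4\,\dist\!\big(\tau(\f^{-1}(z)),\,\tau(\f^{-1}(w))\big)
= 4\,\dist(\f^{-1}(z),\f^{-1}(w)) \le 20\,\dist(z,w),
\end{align*}
and running the same chain of inequalities in the reverse direction gives $\dist(f(z),f(w)) \ge \tfrac{1}{20}\,\dist(z,w)$, as required.

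In short, the entire content beyond Theorem~\ref{thm:main} is the elementary fact that bi-Lipschitz distortions multiply under composition, so conjugating the distortion-$1$ symmetry $\tau$ by the distortion-$(5,4)$ map $\f$ yields a symmetry of $\bb$ of distortion at most $20$. I do not expect a genuine obstacle here: the only steps needing any care are the passage from \maxstretch\ bounds to the global Lipschitz bounds (via telescoping together with geodesic convexity of $\bb$ in $\{0,1\}^{n+1}$) and the verification that $\tau$ is an isometric involution interchanging $a$ and $b$, and both are routine.
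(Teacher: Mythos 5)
Your proposal is correct and matches the paper's own argument: the paper likewise defines $f(z) = \f(\f^{-1}(z) \oplus x' \oplus y')$ with $x'=\f^{-1}(x)$, $y'=\f^{-1}(y)$, i.e.\ conjugates the XOR-translation by $\f$, and invokes convexity of $\bb$ in $\cc[n+1]$ so that the per-edge stretch bounds telescope to global Lipschitz bounds. You have simply written out the details that the paper leaves as ``easy to see.''
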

\noindent
To see this, first note that $\bb$ is a convex subset of $\cc[n+1]$, and thus, the distances between vertices in $\bb$ are the same as their distances as a subset of the cube. Now, for a given pair $x,y \in \bb$, let $x' = \f^{-1}(x)$ and $y' = \f^{-1}(y)$, where $\f$ is the function from Theorem~\ref{thm:main}. Define $f \colon \bb \to \bb$ as $f(z) = \f( \f^{-1}(z) \oplus x' \oplus y')$. It is easy to see
that $f$ indeed satisfies the requirements of
Corollary~\ref{cor:coarse transitivity}.

\paragraph{Approximating $\boldsymbol{\f}_\mathbf{i}$}
We highlight another consequence of our main theorem that is perhaps somewhat surprising -- the bijection $\f$ of Theorem \ref{thm:main} is ``approximately local'' in the sense that almost \emph{all} of its output bits are essentially determined by only a \emph{constant} number of inputs bits.  To see this, we view the bijection $\f\colon\cc\to\bb$ as a vector of Boolean functions $\langle \f_1,\ldots, \f_{n+1}\rangle$, where $\f_i(x)$ is the $i^{\text{th}}$ output bit of $\f$ on input $x\in\cc$.  Recall that the \emph{total influence} of a Boolean function $\f_i \colon \cc \to\{0,1\}$ is the quantity
\[ \Inf[\f_i] =  \Ex_{x\sim \cc}\left[\#\{j\in [n] \colon \f_i(x) \ne \f_i(x+e_j)\}\right]. \]
By linearity of expectation, Theorem \ref{thm:main} implies that a typical $\f_i$ has bounded total influence.
\begin{equation} \Ex_{i \sim [n+1]} [\Inf[\f_i]] = \avgstretch(\f) \le \maxstretch(\f) \le 4. \label{small-total-influence}
\end{equation}
Next we recall Friedgut's Junta Theorem, which states that a Boolean function with bounded total influence is well-approximated by another Boolean function that only depends on a \emph{constant} number of input bits.  More precisely,
\newtheorem*{Friedgut}{Friedgut's Junta Theorem}
\begin{Friedgut}[\cite{F98}]
Let $f \colon \cc \to\{0,1\}$ be a Boolean function. For every
$\eps > 0$ there exists a Boolean function $g \colon \{0,1\}^n \to\{0,1\}$
such that $g$ is a $2^{O(\Inf[f]/\eps)}$-junta%
\footnote{Recall that a $k$-junta is a Boolean function that only depends on at most $k$ of its input bits.} and $\Pr[f(x) \ne g(x)] \le \eps$.
\end{Friedgut}
\noindent
Combining Equation~\eqref{small-total-influence} with Friedgut's Junta Theorem,
we see that for any constants $\delta,\eps > 0$, all but a $\delta$-fraction
of the $\f_i$'s are $\eps$-approximated by $O(1)$-juntas.
In fact, we give a direct proof that the function $\f$ in
Theorem~\ref{thm:main} satisfies the following stronger property.
\begin{proposition}\label{prop:f_i = x_i}
    For all $i \in [n]$ it holds that
    \[\Pr_{x}[\f_i(x) = x_i] > 1 - O(1/\sqrt{n}).\]
\end{proposition}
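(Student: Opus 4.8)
The plan is to work directly from the explicit description of $\f$. The construction should have the feature that on the ``upper'' part of the cube --- inputs $x$ whose Hamming weight is comfortably above $n/2$ --- the map acts as $x \mapsto (x,b)$ for a single appended bit $b$, so that $\f_i(x) = x_i$ holds there for every $i \in [n]$ for trivial reasons. The only inputs on which $\f_i(x)$ can differ from $x_i$ therefore lie in the lower region $\{x : |x| \lesssim n/2\}$, on which $\f$ must perform a ``correction'' that raises the weight enough to land inside $\bb$; for a weight-$(n/2-t)$ input this correction must flip at least $t$ coordinates. First I would make this precise, pinning down the bad set $B_i = \{x : \f_i(x) \neq x_i\}$ and describing, as a function of $|x|$, how many and which coordinates the correction step flips.

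The heart of the argument is then to show that although the correction flips many coordinates on a constant fraction of inputs, any \emph{fixed} coordinate $i$ is flipped on only an $O(1/\sqrt{n})$ fraction. I would establish this by identifying the symmetry of the construction that makes all $n$ input coordinates interchangeable (cyclic shifts, say, or whatever equivariance the definition of $\f$ enjoys), so that $\Pr_x[\f_i(x) \neq x_i]$ is the same for every $i \in [n]$ and hence equals
\[
\frac{1}{n}\sum_{j=1}^{n}\Pr_x[\f_j(x) \neq x_j] \;=\; \frac{1}{n}\,\Ex_x\big[\dist\big(x,\f(x)_{[n]}\big)\big],
\]
where $\f(x)_{[n]}$ denotes the first $n$ coordinates of $\f(x)$; absent such a symmetry one would instead bound $|B_i|$ directly from the combinatorics of the correction step. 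Either way it remains to bound the expected number of coordinate changes. Since $|\f(x)| > n/2$ forces $|\f(x)_{[n]}| \geq n/2$, we have $\dist(x,\f(x)_{[n]}) \geq (n/2 - |x|)^+$, and one expects the construction to be essentially optimal here, i.e. $\dist(x,\f(x)_{[n]}) = O\big((n/2-|x|)^+ + 1\big)$. Combining this with the standard estimate $\Ex_x[(n/2-|x|)^+] = \Theta(\sqrt{n})$ for $|x|$ distributed as $\mathrm{Bin}(n,1/2)$ gives $\Ex_x[\dist(x,\f(x)_{[n]})] = O(\sqrt{n})$, and dividing by $n$ yields the claimed $O(1/\sqrt{n})$.

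I expect the main obstacle to be this ``balance'' step. The black-box facts about $\f$ --- that it is a bijection with $\maxstretch(\f) \leq 4$ --- are not sufficient: they only give the averaged statement $\frac1n\sum_i \Pr_x[\f_i(x) \neq x_i] = \Theta(1/\sqrt{n})$, and in fact the lower bound $\Omega(1/\sqrt{n})$ is forced for every $i$, because $\{y \in \bb : y_i = 0\}$ has size $2^{n-1} - \binom{n}{n/2}/2 < 2^{n-1}$, so $\f$ cannot map the $2^{n-1}$ inputs with $x_i = 0$ into it. Upgrading the coordinate average into a per-coordinate bound genuinely requires opening up the construction and either invoking its symmetry or carrying out an explicit count of the correction step; that is where the real work lies. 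By comparison, the concentration estimate for $(n/2 - |x|)^+$ and the reduction to $\Ex_x[\dist(x,\f(x)_{[n]})]$ are routine.
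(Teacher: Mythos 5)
There is a genuine gap: both mechanisms you propose for the key step fail for this particular $\f$. First, the coordinates are \emph{not} interchangeable. The De~Bruijn--Tengbergen--Kruyswijk partition underlying $\f$ is built by repeatedly marking consecutive $10$ pairs, so it depends essentially on the left-to-right order of the coordinates; it has no cyclic or permutation equivariance, and $\Pr_x[\f_i(x)\neq x_i]$ genuinely varies with $i$. Hence you cannot recover the per-coordinate probability as $\frac1n\Ex_x\bigl[\dist(x,\f(x)_{[1,\dots,n]})\bigr]$; showing that each coordinate's probability is within a constant factor of the average is precisely the content of the proposition. Second, your proposed pointwise bound $\dist(x,\f(x)_{[1,\dots,n]}) = O\bigl((n/2-|x|)^+ +1\bigr)$ is false: $\f$ moves $x$ up \emph{its chain} by half its distance $\ell$ from the top of that chain, flipping $\ceil{\ell/2}$ coordinates, and $\ell$ is governed by the chain structure, not by $|x|$. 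For example, $x = 0^{n/2}1^{n/2}$ has weight exactly $n/2$ but contains no $10$ pair, so it lies at the bottom of the full chain from $0^n$ to $1^n$ and $\f$ flips about $n/4$ of its coordinates. Relatedly, your premise that $\f$ fixes the first $n$ coordinates whenever $|x|$ is comfortably above $n/2$ is also wrong: $\f(01^{n-1}) = 1^n\circ 0$ flips coordinate $1$. It is still true that $\Ex_x[\dist(x,\f(x)_{[1,\dots,n]})] = O(\sqrt n)$ (this already follows from $\maxstretch(\f)\le 4$ plus bijectivity, as you note), but that only yields the coordinate-averaged statement you correctly identify as insufficient.

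The fallback you mention --- ``bound $|B_i|$ directly from the combinatorics of the correction step'' --- is exactly where all the work lives, and the proposal leaves it blank. The paper's proof fixes $i$, runs the marking stage separately on the prefix $x_{[1,\dots,i-1]}$ and the suffix $x_{[i+1,\dots,n]}$ (yielding $a$ unmarked zeros and $b$ unmarked ones in the prefix, $c$ and $d$ in the suffix), characterizes the bad event as $\{x_i=0,\ b=0,\ a\ge c\}$ using independence of the two halves, and then evaluates the resulting sum via ballot-type counts of the chains (Corollary~\ref{cor:counting 0^a1^b}) and binomial identities to obtain $O(1/\sqrt{n})$. None of that is routine given your outline, so the proposal does not constitute a proof.
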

\noindent
That is, all but the last output bit of $\f$ are essentially determined by a \emph{single} input bit.

\paragraph{The complexity of $\boldsymbol{\f}$.}
Since the original motivation for constructing $\f$ comes from efficient
sampling of distributions, Theorem~\ref{thm:main} is of larger interest
if the bijection $\f$ (and $\f^{-1}$) can be computed by low-level circuits.

\begin{proposition}\label{prop:TC0}
    The bijections $\f$ and $\f^{-1}$ are computable in DLOGTIME-uniform $\TC{0}$.
\end{proposition}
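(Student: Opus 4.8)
The plan is to unwind the explicit definition of $\f$ and check that each output bit $\f_i(x)$ is computed by applying a fixed decision procedure (of size independent of $n$) to (a) a constant number of bits of $x$ and (b) a handful of \emph{counting} quantities extracted from $x$. The relevant counting quantities are the Hamming weight $|x|$ together with the prefix weights $|x_1\cdots x_j|$ for $j\in[n]$, and the outcomes of comparing these against fixed thresholds such as $n/2$. Computing $|x|$ (or any threshold ``$|x|\ge t$'') is the prototypical $\TC{0}$ task; all $n$ prefix weights can be produced in parallel by one layer of threshold gates followed by an $\AC{0}$ unary-to-binary conversion; and comparing two integers of $O(\log n)$ bits is already in $\AC{0}$. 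Hence all of the ``global'' information used by the construction of $\f$ is available to a DLOGTIME-uniform constant-depth threshold circuit.

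If the construction additionally needs to locate a string inside a fixed Hamming layer --- i.e.\ to rank or unrank a weight-$w$ string in the combinatorial number system --- then one also needs the binomial coefficients $\binom{n}{w}$ and their partial sums $\sum_{j\le w}\binom{n}{j}$. These too are computable in DLOGTIME-uniform $\TC{0}$: factorials are iterated products, and iterated integer multiplication and division lie in DLOGTIME-uniform $\TC{0}$ by the theorem of Hesse, Allender, and Barrington; a sum of $\poly(n)$-bit integers is an iterated addition, hence $\TC{0}$; ranking a string is the sum of the binomials indexed by the positions of its $1$-bits, again an iterated addition; and unranking reduces to comparisons of the given rank against the precomputed partial sums, which parallelize into $\AC{0}$ layered on top of the $\TC{0}$-computed sums. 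In short, whatever within-layer combinatorics $\f$ performs reduces to a bounded number of arithmetic and comparison steps on integers of polynomially many bits.

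Putting the pieces together, $\f_i(x)$ is obtained by composing the $\TC{0}$-computable quantities above with a fixed $\AC{0}$ selection/boolean step, and since $\TC{0}$ is closed under composition with itself and with $\AC{0}$ we get $\f\in\TC{0}$; uniformity is inherited because the wiring of each gate depends only on elementary arithmetic relations among indices that a DLOGTIME machine can check, and the sole non-elementary ingredient, iterated multiplication, is DLOGTIME-uniform by Hesse--Allender--Barrington. The bijection $\f^{-1}$ is handled identically, since by construction it is assembled from exactly the same weight, threshold, and (un)ranking primitives. The one step that demands genuine care --- and where essentially all of the content lies --- is verifying that the combinatorics of $\f$ really can be cast in this ``count, then decide'' normal form with no implicitly sequential subroutine; once that is done, the circuit and uniformity bounds are routine.
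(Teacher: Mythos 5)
There is a genuine gap, and you in fact point to it yourself in your last sentence: the entire content of the proposition is in showing that the definition of $\f$ can be put into your ``count, then decide'' normal form, and your proposal does not do this. Recall that $\f$ is defined via the De~Bruijn--Tengbergen--Kruyswijk partition, which is computed by the \emph{marking stage}: repeatedly find a consecutive pair $10$, mark it, temporarily delete it, and recurse. As written this is an inherently sequential procedure with up to $n/2$ rounds, and none of the primitives you list (Hamming weight, prefix weights, ranking/unranking in a Hamming layer, binomial coefficients) obviously determines which coordinates end up marked. The paper's key step is a non-sequential characterization of the marking: coordinate $i$ is marked if and only if there exist $s_i \le i \le e_i$ such that, reading $1$ as ``('' and $0$ as ``)'', the substring $x_{[s_i,\ldots,e_i]}$ is a balanced parenthesis string (equivalently, lies in the Dyck language); since Dyck membership is in $\TC{0}$, one tests all $O(n^2)$ pairs $(s_i,e_i)$ in parallel and takes an $\OR$. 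Without this (or an equivalent) characterization, there is no argument that the ``implicitly sequential subroutine'' you worry about can be eliminated, so the proof is incomplete at exactly the decisive point.

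Once the marking is available, the rest is indeed the routine counting you describe, but simpler than your toolbox suggests: the unmarked coordinates of $x$ read $0^a1^b$, and $\f$ replaces them by $0^{\floor{a/2}}1^{\ceil{a/2}+b}$ and appends a parity bit, so one only needs to count unmarked zeros, count unmarked positions in each prefix, and compare $2u_i$ against $a$ --- iterated addition and comparison, no iterated multiplication, no binomial coefficients, and no combinatorial number system. Invoking Hesse--Allender--Barrington is harmless but beside the point. I would encourage you to go back to the definition of the marking stage and either prove the balanced-substring characterization or find your own parallel description of which coordinates get marked; that is the lemma your write-up is missing.
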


\begin{remark}
    In fact, we show that for all $i \in [n+1]$ there is an $\NC{0}$-reduction from
    $\maj$ to $\f_i$. That is, $\TC{0}$ is the ``correct'' complexity of $\f$,
    and in particular, $\f$ is not in $\AC{0}$.
    See Proposition~\ref{prop:maj leq f_1} and Remark~\ref{rem:maj leq f_i}
    for details.
\end{remark}

\subsection{The Complexity of Distributions}\label{sec:motivation}

Lower bounds in circuit complexity are usually concerned with showing that a family of functions $\{f_n \colon \cc \to \{0,1\}\}_{n \in \N}$ cannot be computed by a family of circuits $\{C_n\}_{n \in \N}$ belonging to some natural class of circuits such as $\AC{0}$ or $\TC{0}$. Taking a broader interpretation of computation, it is often interesting to show that a class of circuits cannot perform a certain natural task beyond just computing a function.

One such natural task, introduced by Viola~\cite{Vi12}, is that of sampling distributions. In this problem, for a given distribution $\D$ supported
on $\cc$, we are looking for a function $f \colon \cc[m] \to \cc$
that samples (or approximates) $\D$, that is, for a uniformly random $x\!~\sim~\!\cc[m]$,
the distribution $f(x)$ is equal (or close to) $\D$, and furthermore, each output bit $f_i$
of the function $f$ belongs to some low-level complexity class, such as
$\AC{0}$ or $\TC{0}$.

As a concrete example, let $U_{\oplus}$ be the uniform distribution over the
set $\{ (x,\parity(x)) \colon x \in \cc[n-1] \} \seq \cc$.
Note that although the $\parity$ function is not computable in $\AC{0}$ (see~\cite{H86} and references therein) there is a function $f \colon \cc[n-1] \to \cc$ that samples $U_{\oplus}$,
such that each output bit depends on only two input bits:
\[
    f(x_1,\dots,x_{n-1}) = (x_1, x_1 + x_2, x_2 + x_3,\dots, x_{n-2} + x_{n-1},x_{n-1}).
\]
Motivated by the foregoing somewhat surprising example, Viola~\cite{Vi12}
raised the following question.
\begin{challenge}\label{challenge:main}
    Exhibit an explicit map $b \colon \cc \to \{0,1\}$ such that for a uniformly sampled
    $x \sim \cc$, the distribution $(x,b(x)) \in \cc[n+1]$ cannot be
    generated by $\poly(n)$-size $\AC{0}$ circuits given random bits as input.
\end{challenge}
\noindent
Note that the challenge does not specify the input length. Obviously, the
input length should be at least $n$, since this is the entropy of $(x,b(x))$,
and the challenge becomes harder as the input length increases. In this
paper we only consider distributions $\D$ that are uniform over subsets
$A \seq \cc$ of size $|A| = 2^m$ for some $m \in \N$, and functions with
input length $m$ sampling $\D$ (rather than approximating $\D$). Therefore, in this setting, the question of sampling a distribution $\D$ reduces to finding a bijection from $\cc[m]$
to $A$ such that each bit of the bijection is efficiently computable.

As a concrete candidate for the map $b \colon \cc \to \{0,1\}$ in
Challenge~\ref{challenge:main}, Viola suggested to use $\maj$ -- the other notoriously hard function for $\AC{0}$ other than $\parity$.
The following two problems have been stated in~\cite{LV11}.
\begin{problem}\label{problem:embedding into majset}
    Let $n \in \N$ be even. Does there exist a bijection
    $g \colon \cc \to \bb$ such that
    each output bit of $g$ is computable in $\AC{0}$?
\end{problem}

\begin{problem}\label{problem:embedding into (x,maj(x))}
    Let $n \in \N$ be odd. Does there exist a bijection
    $h \colon \cc \to \{ (x,\maj(x)) \colon x \in \cc \}$ such that
    each output bit of $h$ is computable in $\AC{0}$?
\end{problem}
\noindent
Note that a positive answer to Problem~\ref{problem:embedding into majset} implies
a positive answer to Problem~\ref{problem:embedding into (x,maj(x))}.
Indeed, if $g \colon \cc \to \bb$ is an embedding from
Problem~\ref{problem:embedding into majset}, then the function
$h \colon \cc[n+1] \to \cc[n+2]$ defined as
\[
    h(x_1, \dots, x_n,x_{n+1}) =
                    \begin{cases}
                        g(x_1,\dots,x_n) \circ 1 & \text{\quad $x_{n+1} = 1$} \\
                        {\sf flip}(g(x_1,\dots,x_n)) \circ 0 & \text{\quad $x_{n+1} = 0$} \\
                    \end{cases}
\]
gives an embedding for Problem~\ref{problem:embedding into (x,maj(x))}.
Therefore, a negative answer to Problem~\ref{problem:embedding into (x,maj(x))}
implies a negative answer to Problem~\ref{problem:embedding into majset}. In the other direction, if a function
$h \colon \cc \to \{ (x,\maj(x)) \colon x \in \cc \}$
gives a positive answer to Problem~\ref{problem:embedding into (x,maj(x))},
then the function $g \colon \cc \to \cc$ defined as%
\footnote{We use the following notation: for a string $s \in \cc$ and integers
$i \le j$ in $[n]$, the string $s_{[i,\ldots,j]}$ denotes the substring $s_i s_{i+1} \cdots s_j$.}
\[
    g(x_1, \dots, x_n) =
                    \begin{cases}
                        (h(x_1,\dots,x_n))_{[1,\dots,n]}            & \text{\quad} (h(x_1,\dots,x_n))_{n+1} = 1 \\
                        {\sf flip}(h(x_1,\dots,x_n))_{[1,\dots,n]}  & \text{\quad otherwise} \\
                    \end{cases}
\]
samples $\bb[n-1]$ using input of length $n$, which almost%
\footnote{Problem~\ref{problem:embedding into (x,maj(x))} asks for a function
that takes $n-1$ bits as input.}
answers Problem~\ref{problem:embedding into (x,maj(x))}.

On the positive side, Viola~\cite{Vi12} showed an explicit $\AC{0}$ circuit
$C \colon \{0,1\}^{\poly(n)} \to \{0,1\}^n$ of size $\poly(n)$
whose output distribution has statistical distance $2^{-n}$ from the uniform
distribution on $\{ (x,\maj(x)) \colon x \in \cc \}$.

Problem \ref{prob:dict to maj} was raised by Lovett and Viola~\cite{LV11} in an attempt to prove a lower bound for Problem~\ref{problem:embedding into majset}. A positive answer to Problem~\ref{prob:dict to maj} would imply a lower bound
for Problem~\ref{problem:embedding into majset}, since by the result
of~\cite{Bop}, any function $f \colon \cc[n] \to \cc[n+1]$ computable by a
polynomial size Boolean circuit of constant depth has average stretch at most
$\log^{O(1)}(n)$.

As we resolved Problem \ref{prob:dict to maj} negatively in a strong sense, it
seems that new ideas beyond the sensitivity-based structural results
of~\cite{Bop} will be required in order to resolve
Problems~\ref{problem:embedding into majset}
and~\ref{problem:embedding into (x,maj(x))}.

\subsection{Proof Overview}\label{sec:proof overview for main theorem}

In this section we describe in high-level the proof of Theorem~\ref{thm:main}. A full proof is given in Section~\ref{sec:proof of main theorem}. Let $n \in \N$ be an even integer. Our goal is to map $\cc$ to $\bb$ in a way that the two endpoints of every edge in $\cc$ are mapped to close vertices in $\bb$. The key building block
we use is a classical partition of the vertices of $\cc$ to symmetric chains, due to
De~Bruijn, Tengbergen, and Kruyswijk~\cite{BTK51},
where a symmetric chain is a path $\{c_k, c_{k+1}, \ldots, c_{n-k}\}$ in $\cc$, such that
each $c_i$ has Hamming weight $i$ (see Figure~\ref{fig:the partition}).

As a first step, we study the chains in the partition of De~Bruijn~\etal~ Roughly speaking, we show%
\footnote{This is somewhat implicit in our proofs, and is mentioned here mainly in order to build an intuition.}
that adjacent chains move closely to each other.
More precisely, if two adjacent vertices $x$ and $y$ belong to different chains,
then $x$ and $y$ have the same distance from the top of their respective chains, up to some additive constant. Moreover, the lengths of the two chains differ by at most some additive constant, and the $i^{\text{th}}$ vertex in one chain, when counting from the top, is $O(1)$-close to the $i^{\text{th}}$ vertex in the other chain (if such exists).

We now describe how to map $\cc$ to $\bb$ based on the partition of De~Bruijn~\etal~Consider a chain $c_k, c_{k+1}, \ldots, c_{n-k}$. Our mapping will ``squeeze'' the vertices to the top half of the cube while exploiting the extra dimension. In particular, every vertex will climb up its chain half the distance it has from the top, and then, the collision between two vertices is resolved by setting the extra last bit to 1 for the first vertex and to 0 for the second vertex. More precisely, the vertex $c_{n-k}$, which is at the top of its chain, is mapped to $c_{n-k} \circ 1$, while $c_{n-k-1}$ is mapped to $c_{n-k} \circ 0$. The third vertex from the top $c_{n-k-2}$ is mapped to $c_{n-k-1} \circ 1$ while $c_{n-k-3}$ is mapped to $c_{n-k-1} \circ 0$ and so on. The vertex $c_k$ at the bottom of the chain is mapped to $c_{n/2} \circ 1$, which is indeed in $\bb$.

Consider now two adjacent vertices $x,y$ in $\cc$. By the above, these vertices reside in ``close'' chains with roughly the same length and have roughly the same distance from the top of their respective chains. Thus, in the climbing process, both $x$ and $y$ will be mapped to vertices that have roughly the same distance from the top of their respective chains, and hence, from the discussion above, their images will be $O(1)$-close.

\section{Proof of the Main Theorem}\label{sec:proof of main theorem}

In this section we prove the main theorem. In Section~\ref{sec:the partition} we describe the De~Bruijn-Tengbergen-Kruyswijk partition. In Section~\ref{sec:f} we define the mapping $\f$ and prove basic facts about it. In Section~\ref{sec:finally the proof} we give the proof for Theorem~\ref{thm:main}, omitting some technical details that can be found in Section~\ref{sec:missing claims}.

\subsection{The De~Bruijn-Tengbergen-Kruyswijk Partition}\label{sec:the partition}

\begin{definition}
Let $n$ be an even integer. A \emph{symmetric chain} in $\cc$ is a
sequence of vertices $C = \{ c_k, c_{k+1}, \dots, c_{n-k} \}$ such that
$|c_i| = i$ for $i=k,k+1,\ldots,n-k$, and $\dist(c_i,c_{i+1}) = 1$ for
$i=k,k+1,\ldots,n-k-1$. We say that a symmetric chain is \emph{monotone} if it satisfies the following property: if $c_{i-1}$ and $c_i$ differ in the $j^{\text{th}}$
coordinate, and $c_i$ and $c_{i+1}$ differ in the $(j')^{\text{th}}$ coordinate,
then $j < j'$.
\end{definition}
\noindent
We shall represent a monotone symmetric chain as follows.
Let $y \in \regcube$ be such that $m = |\{ i \colon y_i = \emptyslot\}|$
satisfies $m \equiv n \pmod 2$, and let $k = (n-m)/2$.
The monotone symmetric chain $C_y = \{ c_k, c_{k+1}, \ldots, c_{n-k} \}$
is specified by $y$ as follows. For $i = k,k+1,\ldots,n-k$, the string $c_i$ is
obtained by replacing the $m-(i-k)$ leftmost symbols $\emptyslot$ of $y$ by $0$
and the remaining $i-k$ symbols $\emptyslot$ by $1$. Note that $C_y$ is indeed a
monotone symmetric chain.

De~Bruijn, Tengbergen, and Kruyswijk~\cite{BTK51} suggested a recursive
algorithm that partitions $\cc$ to monotone symmetric chains. We will follow
the presentation of the algorithm described in~\cite{LW2001} (see Problem 6E
in Chapter 6). The algorithm gets as input a string $x \in \cc$, and computes
a string $y \in \regcube$ which encodes the monotone symmetric chain $C_y$
that contains $x$.

The algorithm is iterative. During the running of the algorithm, every coordinate
of $x$ is either marked or unmarked, where we denote a marked $0$ by $\hat{0}$
and a marked $1$ by $\hat{1}$.
In each step, the algorithm chooses a consecutive pair $10$, marks it
by $\hat{1}\hat{0}$, temporarily deletes it, and repeats the process. The algorithm halts once
there is no such pair, that is  the remaining string is of the form
$00\dots01\dots11$. We call this stage of the algorithm the \emph{marking stage},
and denote the marked string by $\markhat(x) \in \{0,1,\hat{0},\hat{1}\}^n$.
The string $y$ is then defined as follows: if the $i^{\text{th}}$ bit of $x$
was marked then $y_i = x_i$. Otherwise, $y_i = \emptyslot$.

For example, consider the string $x = 01100110$. At the first iteration, the
algorithm may mark the third and fourth bits to obtain $01\hat{1}\hat{0}0110$.
Then, the second and fifth bits are marked 0\^{1}\^{1}\^{0}\^{0}110. Lastly,
the rightmost two bits are marked, and we obtain the marked string
$\markhat(x) = 0\hat{1}\hat{1}\hat{0}\hat{0}1\hat{1}\hat{0}$.
Hence $y = \emptyslot 1100 \emptyslot 10$ and
$C_y = \{ \und{0}1100\und{0}10, \und{0}1100\und{1}10, \und{1}11100\und{1}10 \}$.

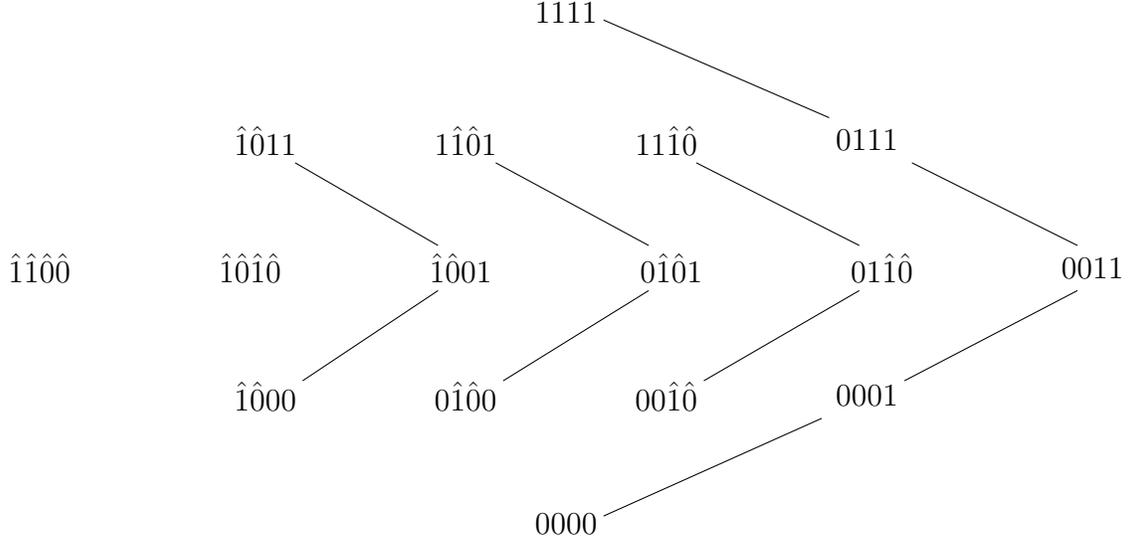
\begin{figure}\centering
\begin{tikzpicture}

\draw (-5,1.7) node {$\hat{1}\hat{0}00$};
\draw (-8+28/5,3.4) node {$\hat{1}\hat{0}01$};
\draw (-5,5.1) node {$\hat{1}\hat{0}11$};

\draw (-5+0.5,1.9) -- (-8+28/5-0.3,3.1);
\draw (-8+28/5-0.3,3.7) -- (-5+0.4,4.8);

\draw (-5+8/3,1.7) node {$0\hat{1}\hat{0}0$};
\draw (-8+42/5,3.4) node {$0\hat{1}\hat{0}1$};
\draw (-5+8/3,5.1) node {$1\hat{1}\hat{0}1$};

\draw (-5+8/3+0.5,1.9) -- (-8+42/5-0.3,3.1);
\draw (-8+42/5-0.3,3.7) -- (-5+8/3+0.4,4.8);

\draw (-5+16/3,1.7) node {$00\hat{1}\hat{0}$};
\draw (-8+56/5,3.4) node {$01\hat{1}\hat{0}$};
\draw (-5+16/3,5.1) node {$11\hat{1}\hat{0}$};

\draw (-5+16/3+0.5,1.9) -- (-8+56/5-0.3,3.1);
\draw (-8+56/5-0.3,3.7) -- (-5+16/3+0.4,4.8);

\draw (-1,0) node {$0000$};
\draw (3,1.7) node {$0001$};
\draw (6,3.4) node {$0011$};
\draw (3,5.1) node {$0111$};
\draw (-1,6.8) node {$1111$};

\draw (-0.5,0.1) -- (2.4,1.4);
\draw (3.5,1.9) -- (5.8,3.1);
\draw (5.8,3.7) -- (3.6,4.8);
\draw(2.5,5.4) -- (-0.5,6.7);

\draw (-8+14/5,3.4) node {$\hat{1}\hat{0}\hat{1}\hat{0}$};
\draw (-8,3.4) node {$\hat{1}\hat{1}\hat{0}\hat{0}$};

\end{tikzpicture}
\caption{The De~Bruijn-Tengbergen-Kruyswijk Partition for $n=4$.}
\label{fig:the partition}
\end{figure}

Readily, the algorithm induces a partition of $\cc$ to monotone symmetric chains.
We stress that although the algorithm has some degree of freedom when choosing
a $10$ pair out of, possibly, many pairs in a given iteration, the output
of the algorithm, $y$, is independent of the specific choices that were made.
That is, $y$ is a function of $x$, and does not depend on the specific order in which the algorithm performs the marking. This assertion can be
proven easily by induction on $n$. As a consequence, we may choose the
ordering of the $10$ pairs as we wish. We will use this fact in the proof
of Theorem~\ref{thm:main}.

\subsection{The Bijection $\boldsymbol{\f}$}\label{sec:f}

We define the mapping $\f$ as follows. Let $n \in \N$ be an even integer. For an input $x \in \cc$, let
$C = \{ c_k, c_{k+1}, \ldots, c_{n-k}\}$ be the symmetric chain from the partition of De~Bruijn~\etal, that contains $x$. Let $j$ be the index such that $x = c_j$.
Define
\begin{equation}\label{eq:definition of f}
    \f(x) \eqdef \begin{cases}
                    c_{\frac{(n-k)+j}{2}} \circ 1    & j \equiv (n-k) \pmod{2}; \\
                    c_{\frac{(n-k)+j+1}{2}} \circ 0    & j \not\equiv (n-k) \pmod{2}.
                \end{cases}
\end{equation}

\begin{claim}\label{claim:f is a bijection}
    The mapping $\f$ is a bijection from $\cc$ to $\bb$.
\end{claim}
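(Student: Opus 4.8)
The plan is to establish injectivity and surjectivity separately, leveraging the fact that the De~Bruijn--Tengbergen--Kruyswijk chains partition $\cc$. First I would observe that on a single fixed chain $C = \{c_k, \ldots, c_{n-k}\}$, the map $\f$ restricted to $C$ is injective: the $n-2k+1$ vertices $c_k, c_{k+1}, \ldots, c_{n-k}$ are sent to the $n-2k+1$ distinct strings
\[
c_{n-k}\circ 1,\ c_{n-k}\circ 0,\ c_{n-k-1}\circ 1,\ c_{n-k-1}\circ 0,\ \ldots,
\]
walking down from the top of the chain, alternating the appended bit between $1$ and $0$. Concretely, $c_j$ with $j \equiv n-k \pmod 2$ goes to $c_{((n-k)+j)/2}\circ 1$ and $c_j$ with $j\not\equiv n-k\pmod 2$ goes to $c_{((n-k)+j+1)/2}\circ 0$; as $j$ ranges over $k,\ldots,n-k$ the index $\lceil ((n-k)+j)/2\rceil$ takes each value in $\{\lceil (n/2)\rceil,\ldots,n-k\}$ exactly twice (once with each parity of $j$), except the top value $n-k$ which is hit once (by $j=n-k$ with bit $1$); this is exactly the list above, so no two vertices of $C$ collide. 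I would also verify here the range claim: the smallest index appearing is $n/2$ (attained at $j=k$, since $((n-k)+k)/2 = n/2$), so every image has Hamming weight $\ge n/2$, with the appended bit pushing the boundary cases $c_{n/2}\circ 1$ to weight $n/2+1 > (n+1)/2$, hence landing in $\bb \subseteq \cc[n+1]$.

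Next, to get injectivity across chains, I would use that each chain $C$ is uniquely encoded by its ``skeleton'' $y \in \regcube$, and that the images of $C$ under $\f$ are all of the form $c_i \circ b$ where $c_i \in C$. The key point is that from an output string $z = w \circ b \in \cc[n+1]$ one can recover the chain: $w \in \cc$ lies in a unique DBTK chain $C'$, and—crucially—$w$ is \emph{not} at the bottom of $C'$ (indeed $|w| \ge n/2 \ge k'$, and one checks $|w| > k'$ unless the chain is trivial, handled separately), so $w$ determines which chain contributed $z$. Then $b$ together with $w$'s position in $C'$ pins down $j$ via the inverse of \eqref{eq:definition of f}. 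So two inputs from different chains cannot share an image. Combined with the per-chain injectivity, $\f$ is injective.

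For surjectivity, the cleanest route is a counting argument: $\f$ is injective, $|\cc| = 2^n$, and $|\bb| = 2^n$ (the Hamming ball $\{x \in \cc[n+1] : |x| > n/2\}$ has exactly half of the $2^{n+1}$ points, since for $n$ even $|x| > n/2 \iff |x| \ge n/2+1$ and pairing $x \leftrightarrow \bar x$ shows the two halves $\{|x| \le n/2\}$ and $\{|x| \ge n/2+1\}$ are equinumerous). Since I have already shown $\f(\cc) \subseteq \bb$ in the range verification above, an injection between finite sets of equal size is a bijection. I expect the main obstacle to be the cross-chain injectivity step—specifically, arguing carefully that $w \circ b$ determines the originating chain, i.e.\ that the top portion $\{c_{\lceil (n/2)\rceil}, \ldots, c_{n-k}\}$ of a chain never coincides with part of another chain's image. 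This follows because the $c_i$'s appearing as (the first $n$ bits of) images all have $|c_i| \ge n/2$, they lie in a unique DBTK chain, and within that chain their position plus the appended bit recover everything; but one must handle the degenerate chain $\{c_{n/2}\}$ (where $k = n/2$, $m=0$, $y$ has no $\emptyslot$) and the boundary indices with a little care, which is where the bookkeeping lives.
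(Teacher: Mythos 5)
Your proof is correct and follows essentially the same route as the paper: the range computation is identical, and your cross-chain injectivity step (recovering the chain from the first $n$ output bits, then the position from the weight and the appended bit) is precisely the paper's explicit inverse formula~\eqref{eq:f inverse}, the only cosmetic difference being that the paper gets surjectivity by observing the inverse is defined on all of $\bb$ while you count $|\bb|=2^n$. One bookkeeping slip that does not affect the argument: within a chain the index value hit only once is the \emph{bottom} one, $n/2$ (reached by $j=k$ with appended bit $1$), not the top value $n-k$, which is hit twice (by $j=n-k$ with bit $1$ and by $j=n-k-1$ with bit $0$), consistent with your own displayed list.
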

\begin{proof}
We first show that the range of $\f$ is $\bb$. Consider $x \in \cc$ and let $C = \{ c_k, c_{k+1}, \ldots, c_{n-k}\}$ be the symmetric chain that contains $x$. Suppose that $x = c_j$ for some $k \leq j \leq n-k$.
If $j \equiv (n-k) \pmod 2$, then using the fact that $j \geq k$,
\[
|\f(x)| = \left| c_\frac{n-k+j}{2}  \circ 1 \right| = \frac{n-k+j}{2} + 1 > \frac{n}{2}.
\]
Otherwise, $j \not\equiv (n-k) \pmod 2$. Since $n$ is even, it follows that
$j \not \equiv k \pmod2$, and thus $j \geq k+1$. Hence,
\[
|\f(x)| = \left| c_\frac{n-k+j+1}{2} \circ 0 \right| > \frac{n}{2}.
\]
In both cases $\f(x) \in \bb$.

We conclude the proof by describing the inverse mapping $\f^{-1} \colon \bb \to \cc$. For $z \in \bb$, write $z = x \circ z_{n+1}$,
where $x \in \cc$ and $z_{n+1}$ is the $(n+1)^{\text{st}}$ bit of $z$.
Let $C = \{ c_k, c_{k+1}, \ldots, c_{n-k}\}$ be the symmetric chain that contains $x$,
and let $j$ be the index such that $x = c_j$ (note that $j \ge n/2$). Then,
\begin{equation}\label{eq:f inverse}
    \f^{-1}(z) = \begin{cases}
                    c_{2j-(n-k)}    & \text{ if } z_{n+1} = 1; \\
                    c_{2j-(n-k)-1}    & \text{ if } z_{n+1} = 0.
                \end{cases}
\end{equation}
It is straightforward to verify that this is indeed the inverse mapping of $\f$.
\end{proof}
\noindent
In order to understand the mapping $\f$ better, consider $x \in \cc$ and let $y \in \regcube$ be the encoding of the chain that
contains $x$. Note that if $1 \le i_1 < i_2 < \cdots < i_t \le n$ are the
coordinates in which $y$ contains $\emptyslot$, then there exists some
$0 \le \ell \le t$ such that $x_{i_1} = \cdots = x_{i_\ell} = 0$ and
$x_{i_{\ell+1}} = \cdots = x_{i_t} = 1$.
That is, $x$ is located at the $(\ell+1)^{\text{st}}$ position
of the chain $C_y$, when counting from the top. The function $\f$ outputs
the vertex located at the $(\floor{\ell/2}+1)^{\text{st}}$ position in the chain,
concatenated with 1 or 0, depending on the parity of $\ell$.
In other words, we obtain $\f(x)$ by keeping intact all the bits of $x$
in the coordinates other than $i_{\floor{\ell/2}+1}, \ldots, i_\ell$,
and by setting $\f(x)_{i_{\floor{\ell/2}+1}} = \cdots = \f(x)_{i_{\ell}} = 1$.
Then, we append 1 to the obtained string if $\ell$ is even,
and append 0 otherwise.

For example, let us consider the Boolean cube $\cc[4]$, whose partition is
presented in Figure~\ref{fig:the partition}, and write explicitly where each
vertex in the chain of length 5 is mapped under $\f$.
\begin{align*}
&\f(1111) = 1111 \circ 1,\\
&\f(0111) = 1111 \circ 0,\\
&\f(0011) = 0111 \circ 1,\\
&\f(0001) = 0111 \circ 0,\\
&\f(0000) = 0011 \circ 1,
\end{align*}
where we write the concatenation mark $\circ$ only for the sake readability.

\medskip
The following claim is immediate from the definition of $\f$.
\begin{claim}\label{claim:mark to infl}
    Fix a string $x \in \cc$. Let $M \seq [n]$ be the set of marked coordinates in $\markhat(x)$.
    Then,
    \begin{itemize}
        \item For every $i \in  M$ it holds that $\f(x)_i = x_i$.
        \item For every $j \in [n] \setminus M$, the $j^{\text{th}}$ coordinate
        of $\f(x)$ does not depend on any of the bits $\{x_i\}_{i \in M}$.
    \end{itemize}
\end{claim}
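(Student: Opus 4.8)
The plan is to extract both items directly from the structural description of $\f$ given in the paragraph preceding the claim, so that essentially no new computation is needed; the one point requiring care is the precise meaning of ``does not depend on the bits $\{x_i\}_{i\in M}$''. To set up, given $x\in\cc$ let $y\in\regcube$ encode the chain $C_y$ containing $x$, let $i_1<\cdots<i_t$ be the coordinates with $y_{i_s}=\emptyslot$ (these are exactly the unmarked coordinates, i.e.\ $[n]\setminus M$), and let $\ell\in\{0,\dots,t\}$ be the index, shown to exist above, with $x_{i_1}=\cdots=x_{i_\ell}=0$ and $x_{i_{\ell+1}}=\cdots=x_{i_t}=1$. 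The description then says that $\f(x)$ is obtained from $x$ by leaving every coordinate outside $\{i_{\floor{\ell/2}+1},\dots,i_\ell\}$ untouched, setting $\f(x)_{i_{\floor{\ell/2}+1}}=\cdots=\f(x)_{i_\ell}=1$, and appending one extra bit (irrelevant here, since the claim concerns coordinates in $[n]$).

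For the first item, observe that $M=[n]\setminus\{i_1,\dots,i_t\}$ is disjoint from $\{i_{\floor{\ell/2}+1},\dots,i_\ell\}\subseteq\{i_1,\dots,i_t\}$, so each $i\in M$ is among the untouched coordinates and hence $\f(x)_i=x_i$. For the second item, compute $\f(x)_j$ for unmarked $j$: writing $j=i_s$ with $1\le s\le t$, the description gives $\f(x)_j=1$ when $\floor{\ell/2}+1\le s\le\ell$, and $\f(x)_j=x_{i_s}$ otherwise, the latter being $0$ if $s\le\floor{\ell/2}$ and $1$ if $s>\ell$; collecting the three cases, $\f(x)_j=1$ exactly when $s>\floor{\ell/2}$. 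Since $s$ is the rank of $j$ within the set $[n]\setminus M$ and $\ell$ is the number of $0$'s among $(x_i)_{i\in[n]\setminus M}$, this exhibits $\f(x)_j$ as a function of $j$, of the set $[n]\setminus M$, and of the unmarked bits of $x$ only --- so changing any bit $x_i$ with $i\in M$, while keeping the marking pattern fixed, leaves $\f(x)_j$ unchanged.

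The only genuinely delicate point --- which I would state explicitly --- is that $M$ is itself a function of all of $x$, so ``$\f(x)_j$ does not depend on $\{x_i\}_{i\in M}$'' has to be read relative to a fixed marking pattern $M$ (equivalently, relative to the chain $C_y$ that $x$ lies in). With that reading both items are, as the paper says, immediate from the definition of $\f$, and I do not expect any obstacle beyond phrasing this carefully.
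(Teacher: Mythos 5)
Your proposal is correct and takes essentially the same route as the paper: the paper offers no proof at all, declaring the claim ``immediate from the definition of $\f$,'' and the structural description of $\f$ you read it off from (keep all bits intact except $i_{\floor{\ell/2}+1},\ldots,i_\ell$, which are set to $1$) is precisely the one the paper gives in the paragraph preceding the claim. Your explicit caveat that ``does not depend on $\{x_i\}_{i\in M}$'' must be read relative to a fixed marking pattern is a sensible clarification and is indeed the reading under which the claim is applied in the proof of Theorem~\ref{thm:main}.
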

\medskip
\noindent
We are now ready to prove Theorem~\ref{thm:main}.

\subsection{Proof of Theorem~\ref{thm:main}}\label{sec:finally the proof}

\begin{proof}[Proof of Theorem~\ref{thm:main}]
We first show that $\maxstretch(\f) \le 4$. Fix an edge in $\cc$, that is, take $x \in \cc$ and $i \in [n]$ such that $x_i = 0$. Our goal is to show that $\dist(\f(x), \f(x+e_i)) \leq 4$. Let
$y,y' \in \regcube$ be the encodings of the chains $C_y, C_{y'}$ that contain
$x,x+e_i$ respectively.
As mentioned in Section~\ref{sec:the partition}, the output of the algorithm
on input $x$ is independent of the order in which the algorithm marks the $10$ pairs.
Therefore, given an input $x$, we may perform the marking stage in three steps:
\begin{enumerate}
  \item Perform the marking stage on the prefix of $x$ of length $i-1$.
  \item Perform the marking stage on the suffix of $x$ of length $n-i$.
  \item Perform the marking stage on the resulting, partially marked, string.
\end{enumerate}
Since $x$ and $x+e_i$ agree on all but the $i^{\text{th}}$ coordinate, the
running of the marking stage in steps 1 and 2 yield the same marking.
That is, prior to the third step, the strings $x$ and $x+e_i$ have the same
bits marked. Denote by $s \in \markedcube[i-1]$ and $t \in \markedcube[n-i]$ the
two partially marked strings such that the resulted strings after the second
step on inputs $x$ and $x+e_i$ are $s \circ 0 \circ t$ and $s \circ 1 \circ t$
respectively.
Let us suppose for concreteness that the string $s$ contains $a$ unmarked
zeros and $b$ unmarked ones, and the string $t$ contains $c$ unmarked zeros
and $d$ unmarked ones. Recall that at the end of the marking stage, all
unmarked zeros are to the left of all unmarked ones in both $s$ and $t$.

By Claim~\ref{claim:mark to infl}, the only coordinates that may contribute to
$\dist(\f(x),\f(x+e_i))$ are the unmarked coordinates prior to the third step, and so
\[
    \dist(\f(x),\f(x+e_i)) = \dist(\f(0^a 1^b \circ 0 \circ 0^c 1^d),\f(0^a 1^b \circ 1 \circ 0^c 1^d)).%
    \footnote{Note that $\f$ on the right hand side is applied to inputs of length not necessarily $n$.
    However, for the sake of readability, we do not indicate the input length when applying $\f$.
    In other words, $\f$ is a shorthand for a family of functions $\{ \f_n \}_{n \in \N}$.}
\]
Therefore, it is enough to bound from above the right hand side by $4$. At this point, it is fairly easy to be convinced that the right hand side is bounded by \emph{some} constant. Proving that the constant is $4$ is done by a somewhat tedious case analysis, according to the relations between $a,b,c$ and $d$. We defer the proof of the following claim to Section~\ref{sec:missing claims}.
\begin{claim}\label{claim:case analysis1}
    For every $a,b,c,d \in \N$, we have
    \[\dist(\f(0^a 1^b \circ 0 \circ 0^c 1^d),\f(0^a 1^b \circ 1 \circ 0^c 1^d)) \leq 4.\]
\end{claim}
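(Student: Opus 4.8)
The plan is to exploit the explicit description of $\f$ recorded right after Claim~\ref{claim:f is a bijection}: if $w$ has exactly $\ell$ unmarked zeros, then $\f(w)$ is obtained from $w$ by flipping from $0$ to $1$ the $\lceil \ell/2\rceil$ \emph{highest-indexed} unmarked-zero coordinates of $w$, and then appending $1$ if $\ell$ is even and $0$ if $\ell$ is odd. Write $\Delta_\beta$ for this set of flipped coordinates of $w_\beta$, where $w_0 = 0^a1^b\circ 0\circ 0^c1^d$ and $w_1 = 0^a1^b\circ 1\circ 0^c1^d$, and write $\ell_\beta$ for the number of unmarked zeros of $w_\beta$. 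Since $w_0$ and $w_1$ agree everywhere except in coordinate $a+b+1$, on the first $a+b+c+d+1$ coordinates we have $\f(w_\beta)=w_\beta\oplus \mathbf{1}_{\Delta_\beta}$, so $\f(w_0)$ and $\f(w_1)$ differ there exactly on the set $\{a+b+1\}\,\triangle\,\Delta_0\,\triangle\,\Delta_1$, and they differ on the last coordinate precisely when $\ell_0$ and $\ell_1$ have opposite parities. Consequently
\[
\dist(\f(w_0),\f(w_1)) \;\le\; 1 + \bigl|\Delta_0\,\triangle\,\Delta_1\bigr| + 1,
\]
and it suffices to show $|\Delta_0\triangle\Delta_1|\le 2$, or $|\Delta_0\triangle\Delta_1|\le 3$ whenever $\ell_0\equiv\ell_1\pmod 2$ (in which case the last term is $0$).

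Next I would run the De~Bruijn--Tengbergen--Kruyswijk marking directly on $w_0$ and $w_1$. Both strings are almost sorted, so the marking just pairs each $1$ with a later $0$ greedily, and one reads off everything one needs. The value of $d$ plays no role: the suffix $1^d$ consists entirely of unmarked ones lying above all unmarked zeros. One gets $\ell_0 = a+\max(0,\,c+1-b)$ and $\ell_1 = a+\max(0,\,c-b-1)$, so that $\ell_0-\ell_1\in\{0,1,2\}$, equalling $1$ exactly when $b=c$ (this is the only case where the appended bits can differ). Moreover the unmarked-zero coordinates of $w_0$ are $\{1,\dots,a\}$ together with, \emph{only when} $c\ge b$, the block $\{a+2b+1,\dots,a+b+c+1\}$; and those of $w_1$ are $\{1,\dots,a\}$ together with, \emph{only when} $c\ge b+2$, the block $\{a+2b+3,\dots,a+b+c+1\}$. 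The set $\Delta_\beta$ is then just the $\lceil\ell_\beta/2\rceil$ largest coordinates in this list.

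The remaining work is a short case analysis on $c-b$. If $c<b$, the two unmarked-zero lists coincide, hence $\Delta_0=\Delta_1$ and $\dist(\f(w_0),\f(w_1))=1$. In each of the regimes $c=b$, $c=b+1$, $c\ge b+2$, the unmarked-zero list of $w_0$ is that of $w_1$ with at most two extra (and smaller) coordinates spliced in at the front of its second block, so the two ``top $\lceil\ell_\beta/2\rceil$'' sets $\Delta_0,\Delta_1$ differ in at most $3$ coordinates, and in at most $2$ when $c=b$; combined with the displayed inequality this gives $\dist(\f(w_0),\f(w_1))\le 4$. The lone degenerate sub-case is $b=0$, where coordinate $a+b+1$ is the left end of the zero-block and may therefore lie in $\Delta_0\triangle\Delta_1$ — but there it only \emph{cancels} a contribution, so the bound still holds. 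The main obstacle is precisely this bookkeeping: one must also track the parity of $a$, which shifts by one the cut point between the prefix $\{1,\dots,a\}$ and the second block inside $\Delta_\beta$, and since the bound $4$ is actually attained (e.g. for $b\ge 1$, $c=b+1$, $a$ odd) there is no slack, so every sub-case has to be evaluated exactly rather than estimated crudely.
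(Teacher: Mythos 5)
Your proposal is correct and follows essentially the same route as the paper: run the De~Bruijn--Tengbergen--Kruyswijk marking explicitly on the two reduced strings $0^a1^b\circ\sigma\circ 0^c1^d$, read off the unmarked zeros, and case-split on the relation between $b$ and $c$ (your regimes $c<b$, $c=b$, $c=b+1$, $c\ge b+2$ match the paper's four cases, with the paper splitting $b<c$ by $a\ge c-b$ instead). Your only real departure is organizational --- bounding $\dist(\f(w_0),\f(w_1))$ by $2+|\Delta_0\,\triangle\,\Delta_1|$ via the sets of flipped coordinates rather than writing out the image strings in full --- and your stated values of $\ell_\beta$, the unmarked-zero blocks, and the symmetric-difference bounds (including the $b=0$ cancellation and the tightness example) all check out.
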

\noindent
This completes the proof for $\maxstretch(\f) \le 4$.
\medskip

We now prove that $\maxstretch(\f^{-1}) \le 5$, where we use
the description of $\f^{-1}$ given in Equation~\eqref{eq:f inverse}.
In order to bound $\maxstretch(\f^{-1})$, let us fix an edge
in $\bb$, that is, take $z \in \bb$ and $i \in [n+1]$ such that $z_i = 0$
and show that $\dist(\f^{-1}(z), \f^{-1}(z+e_i)) \leq 5$.
By the proof of Claim~\ref{claim:f is a bijection},
if $i=n+1$ then $\f^{-1}(z)$ and $\f^{-1}(z+e_i)$ are consecutive vertices
in some monotone symmetric chain, and thus
$\dist(\f^{-1}(z), \f^{-1}(z+e_i)) = 1$.

Therefore, we shall assume henceforth that $i \neq n+1$. Let $z = x \circ z_{n+1}$ and $z+e_i = (x+e_i) \circ z_{n+1}$
for some $x \in \cc$ and $z_{n+1} \in \{0,1\}$.
Let $y,y' \in \regcube$ be the encodings of the chains $C_y, C_{y'}$ that
contain $x,x+e_i$ respectively. Similarly to the proof for $\maxstretch(\f) \le 4$,
we perform the marking stage by first performing the marking stage on
the prefix of $x$ of length $i-1$, then perform the marking stage on the suffix
of $x$ of length $n-i$, and finally, perform the marking stage on the resulting,
partially marked string.
Denote by $s \in \markedcube[i-1]$ and $t \in \markedcube[n-i]$ the
two partially marked strings such that the resulted strings after the second
step on inputs $x$ and $x+e_i$ are $s \circ 0 \circ t$ and $s \circ 1 \circ t$
respectively. Suppose again for concreteness that the string $s$ contains $a$ unmarked
zeros and $b$ unmarked ones, and the string $t$ contains $c$ unmarked zeros
and $d$ unmarked ones.

By Claim~\ref{claim:mark to infl}, the only coordinates that may contribute to
$\dist(\f^{-1}(z),\f^{-1}(z+e_i))$ are the unmarked coordinates in $s$ and $t$,
and so
\[
    \dist(\f^{-1}(z),\f^{-1}(z+e_i))
    =
    \dist(\f^{-1}(0^a 1^b \circ 0 \circ 0^c 1^d \circ z_{n+1}),\f^{-1}(0^a 1^b \circ 1 \circ 0^c 1^d \circ z_{n+1})).
\]
Therefore, it is enough to upper bound the right hand side by $5$. We first note that $a+c+1 \le b+d$. To see this recall that $|z| > n/2$ and $0^a 1^b \circ 0 \circ 0^c 1^d$ was obtain from $x = z_1\ldots z_n$ (that is, $z$ without its last bit $z_{n+1}$) by deleting the same number of zeros and ones.
\begin{claim}\label{claim:case analysis2}
    For every $a,b,c,d \in \N$ such that $a+c+1 \leq b+d$,
    and for every $z_{n+1} \in \{0,1\}$
    it holds that
    \[
    \dist(\f^{-1}(0^a 1^b \circ 0 \circ 0^c 1^d \circ z_{n+1}),
    \f^{-1}(0^a 1^b \circ 1 \circ 0^c 1^d \circ z_{n+1})) \leq 5.
    \]
\end{claim}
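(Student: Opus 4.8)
The plan is to reduce the claim to an explicit description of $\f^{-1}$ on ``staircase'' strings of the form $0^{p}1^{q}0^{r}1^{s}$, and then to a bounded case analysis comparing two such descriptions.

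The first step is to understand how the De~Bruijn--Tengbergen--Kruyswijk marking acts on a staircase $w=0^{p}1^{q}0^{r}1^{s}$. The only adjacent $10$ patterns occur at the interface between the block $1^{q}$ and the block $0^{r}$, so the marking forms exactly $\min(q,r)$ pairs there: if $q\le r$ it marks all of $1^{q}$ together with the first $q$ zeros of $0^{r}$, and if $q\ge r$ it marks all of $0^{r}$ together with the last $r$ ones of $1^{q}$; every other coordinate stays unmarked. From this I can read off the chain encoding $y\in\regcube$ of $w$ (again a staircase over $\{0,1,\emptyslot\}$) and the position of $w$ inside its chain, i.e. the number $\ell$ of unmarked coordinates of $w$ that equal $0$. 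Plugging this into Equation~\eqref{eq:f inverse} — which amounts to saying that $\f^{-1}(w\circ\beta)$ is the string obtained from $y$ by turning its $2\ell+1-\beta$ left-most $\emptyslot$'s into $0$'s and all remaining $\emptyslot$'s into $1$'s — yields a closed form for $\f^{-1}(w\circ\beta)$, which is itself a staircase, with one sub-case according to whether those $2\ell+1-\beta$ zeros fit inside the left-most $\emptyslot$-block of $y$ or spill over into the next one.

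I would then apply this twice, with $w_1=0^{a}1^{b}0^{c+1}1^{d}$ and $w_2=0^{a}1^{b+1}0^{c}1^{d}$, so that the quantity in the claim equals $\dist(\f^{-1}(w_1\circ z_{n+1}),\f^{-1}(w_2\circ z_{n+1}))$. (Here the hypothesis $a+c+1\le b+d$, together with the sharper inequality $a+c+3-2z_{n+1}\le b+d$ that actually holds in the proof of Theorem~\ref{thm:main}, is precisely what makes $w_1\circ z_{n+1}$ and $w_2\circ z_{n+1}$ lie in the Hamming ball and keeps all exponents below non-negative.) Which branch of the $\min(q,r)$ case distinction $w_1$ and $w_2$ fall into is governed by whether $b\le c-1$, $b\in\{c,c+1\}$, or $b\ge c+2$; combined with the overflow sub-cases this leaves only a constant number of cases. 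In each one $\f^{-1}(w_1\circ z_{n+1})$ and $\f^{-1}(w_2\circ z_{n+1})$ come out as two explicit staircases of equal length, and since the Hamming distance of two equal-length staircases is at most the sum of the absolute differences of their three internal block boundaries — which here move by only $O(1)$ — one checks directly that the distance is at most $5$ in every case.

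I expect the last step to be the main obstacle: it is routine but tedious, because of the number of cases and the amount of boundary-shift bookkeeping. The one conceptual point that must be handled with care is the doubling $\ell\mapsto 2\ell+1-\beta$ built into $\f^{-1}$: an $O(1)$ discrepancy between the positions of $w_1$ and $w_2$ in their chains is roughly doubled when one applies $\f^{-1}$, which is exactly why the constant here is $5$ and not the $4$ of Claim~\ref{claim:case analysis1}.
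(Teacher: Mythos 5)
Your plan is correct and follows essentially the same route as the paper: compute the De~Bruijn--Tengbergen--Kruyswijk marking explicitly on staircase strings $0^p1^q0^r1^s$, read off $\f^{-1}$ from Equation~\eqref{eq:f inverse}, and split into the same cases ($b$ versus $c$, plus the overflow subcases corresponding to the paper's Subcases 3.1--3.3), so the only difference is presentational -- your uniform bound via the $O(1)$ shifts of the three block boundaries, and your observation that the stated hypothesis $a+c+1\le b+d$ must be sharpened for $z_{n+1}=0$ (a point the paper glosses over with ``handled similarly''), are both nice touches but not a different proof.
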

\noindent
Therefore, by Claim~\ref{claim:case analysis2} we have $\maxstretch(\f^{-1}) \leq 5$.
This completes the proof of Theorem~\ref{thm:main}.
\end{proof}

\subsection{Proof of Missing Claims}\label{sec:missing claims}
We now return to the proofs of
Claim~\ref{claim:case analysis1} and Claim~\ref{claim:case analysis2}.

\begin{proof}[Proof of Claim~\ref{claim:case analysis1}]
Let $w = 0^a 1^b \circ 0 \circ 0^c 1^d$
and $w' = 0^a 1^b \circ 1 \circ 0^c 1^d$.
We prove the claim using the following case analysis.
It will be convenient to introduce
the function $\even \colon \N \to \{0,1\}$ defined as
$\even(n) = 1$ if $n$ is even, and $\even(n) = 0$ otherwise.
\paragraph{Case 1 ($\boldsymbol{b = c}$).}
In this case we have
$w = 0^a \circ 1^{b} 0^{b} \circ 0 1^d$
and
$w' = 0^a 1 \circ 1^{b} 0^{b} \circ 1^d$.
After the marking stage we get
$\markhat(w) = \und{0}^a \circ \hat{1}^{b} \hat{0}^{b} \circ \und{0} \und{1}^d$
and
$\markhat(w') = \und{0}^a \und{1} \circ \hat{1}^{b} \hat{0}^{b} \circ \und{1}^d$.
Therefore,
\[
    \f(w) = 0^{\floor{\frac{a+1}{2}}} 1^{a-\floor{\frac{a+1}{2}}} \circ 1^{b} 0^{b} \circ 1^{d+1} \circ \even(a+1)
\]
and
\[
    \f(w') = 0^{\floor{\frac{a}{2}}} 1^{\ceil{\frac{a}{2}}+1} \circ 1^{b} 0^{b} \circ 1^d \circ \even(a).
\]
By inspection, one can now easily verify that $\dist(\f(w),\f(w')) \le 4$ in this case.

\paragraph{Case 2 ($\boldsymbol{b > c}$).}
In this case we have
$w = 0^a \circ 1^{b-c-1} \circ 1^{c+1} 0^{c+1} \circ 1^d$
and
$w' = 0^a \circ 1^{b-c+1} \circ 1^{c} 0^{c} \circ 1^d$.
After the marking stage we get
$\markhat(w) = \und{0}^a \und{1}^{b-c-1} \circ \hat{1}^{c+1} \hat{0}^{c+1} \circ \und{1}^d$
and
$\markhat(w') = \und{0}^a \und{1}^{b-c+1} \circ \hat{1}^{c} \hat{0}^{c} \circ \und{1}^d$.
Therefore,
\[
    \f(w) = 0^{\floor{\frac{a}{2}}} 1^{\ceil{\frac{a}{2}} + b-c-1} \circ 1^{c+1} 0^{c+1} \circ 1^d \circ \even(a)
\]
and
\[
    \f(w') = 0^{\floor{\frac{a}{2}}} 1^{\ceil{\frac{a}{2}} + b-c+1} \circ 1^{c} 0^{c} \circ 1^d \circ \even(a).
\]
Therefore, in this case, $\dist(\f(w),\f(w')) \leq 1$.

\paragraph{Case 3 ($\boldsymbol{b < c}$ and $\boldsymbol{a \geq c-b}$).}
In this case we have
$w = 0^a \circ 1^b 0^b \circ 0^{c-b+1} \circ 1^d$
and
$w' = 0^a \circ 1^{b+1} 1^{b+1} \circ 0^{c-b-1} \circ 1^d$.
After the marking stage we get
$\markhat(w) = \und{0}^a \circ \hat{1}^b \hat{0}^b \circ \und{0}^{c-b+1} \und{1}^d$
and
$\markhat(w') = \und{0}^a \circ \hat{1}^{b+1} \hat{0}^{b+1} \circ \und{0}^{c-b-1} \und{1}^d$.
By the assumption that $a \geq c-b$ we have $a \geq \floor{\frac{a+c-b+1}{2}}$,
and so
\[
    \f(w) = 0^{\floor{\frac{a+c-b+1}{2}}} 1^{a-\floor{\frac{a+c-b+1}{2}}}
    \circ 1^b 0^b \circ 1^{d+c-b+1} \circ \even(a+c-b+1)
\]
and
\[
    \f(w') = 0^{\floor{\frac{a+c-b-1}{2}}} 1^{a-\floor{\frac{a+c-b-1}{2}}}
    \circ 1^{b+1} 0^{b+1} \circ 1^{d+c-b-1} \circ \even(a+c-b-1).
\]
Therefore, by inspection we have $\dist(\f(w),\f(w')) \leq 4$ for this case.

\paragraph{Case 4 ($\boldsymbol{b < c}$ and $\boldsymbol{a < c-b}$).}
Just like in the previous case, we have $\markhat(w) = \und{0}^a \circ \hat{1}^b \hat{0}^b \circ \und{0}^{c-b+1} \und{1}^d$
and
$\markhat(w') = \und{0}^a \circ \hat{1}^{b+1} \hat{0}^{b+1} \circ \und{0}^{c-b-1} \und{1}^d$.
By the assumption that $a < c-b$, we have $a \leq \floor{\frac{a+c-b-1}{2}}$,
and so
\[
    \f(w) = 0^{a}
    \circ 1^b 0^b \circ 0^{\floor{\frac{a+c-b+1}{2}}-a} 1^{c-b+1+d-\floor{\frac{a+c-b+1}{2}}+a} \circ \even(a+c-b+1)
\]
and
\[
    \f(w') = 0^{a}
    \circ 1^{b+1} 0^{b+1} \circ 0^{\floor{\frac{a+c-b-1}{2}}-a} 1^{c-b-1+d-\floor{\frac{a+c-b-1}{2}}+a} \circ \even(a+c-b-1).
\]
Therefore, in this case, $\dist(\f(w),\f(w')) \le 2$.

This completes the proof of Claim~\ref{claim:case analysis1}.
\end{proof}
\noindent
We now turn to the proof of Claim~\ref{claim:case analysis2}.
\begin{proof}[Proof of Claim~\ref{claim:case analysis2}]
Let $w = 0^a 1^b \circ 0 \circ 0^c 1^d$ and $w' = 0^a 1^b \circ 1 \circ 0^c 1^d$.
Our goal is to show that $\dist(\f^{-1}(w \circ z_{n+1}),\f^{-1}(w' \circ z_{n+1})) \leq 5$.
Let us suppose for simplicity that $z_{n+1}=1$.
The case $z_{n+1}=0$ is handled similarly, and the same bound is achieved.
We prove the claim using the following case analysis.

\paragraph{Case 1 ($\boldsymbol{b = c}$).}
In this case we have
$w = 0^a \circ 1^{b} 0^{b} \circ 0 1^d$
and
$w' = 0^a 1 \circ 1^{b} 0^{b} \circ 1^d$.
After the marking stage we get
$\markhat(w) = \und{0}^a \circ \hat{1}^{b} \hat{0}^{b} \circ \und{0} \und{1}^d$
and
$\markhat(w') = \und{0}^a \und{1} \circ \hat{1}^{b} \hat{0}^{b} \circ \und{1}^d$.
The assumption $a+c+1 \leq b+d$ implies that in this case we have $d-a-1 \geq 0$.
Therefore,
\[
    \f^{-1}(w \circ 1) = 0^{a} \circ 1^{b} 0^{b} \circ 0^{a+2} 1^{d-a-1}
\]
and
\[
    \f^{-1}(w' \circ 1) = 0^{a+1} \circ 1^{b} 0^{b} \circ 0^{a-1} 1^{d-a+1}.
\]
Therefore,
$\dist(\f^{-1}(w \circ 1),\f^{-1}(w' \circ 1)) \leq 4$.

\paragraph{Case 2 ($\boldsymbol{b < c}$).}
In this case we have
$w = 0^a \circ 1^{b} 0^{b} \circ 0^{c-b+1} 1^d$
and
$w' = 0^a \circ 1^{b+1} 0^{b+1} \circ 0^{c-b-1} 1^d$.
After the marking stage we get
$\markhat(w) = \und{0}^a \circ \hat{1}^{b} \hat{0}^{b} \circ \und{0}^{c-b+1} \und{1}^d$
and
$\markhat(w') = \und{0}^a \circ \hat{1}^{b+1} \hat{0}^{b+1} \circ \und{0}^{c-b-1} \und{1}^d$.
Therefore,
\[
    \f^{-1}(w \circ 1) = 0^{a} \circ 1^{b} 0^{b} \circ 0^{a+2(c-b+1)} 1^{d-(a+c-b+1)}
\]
and
\[
    \f^{-1}(w' \circ 1) = 0^{a} \circ 1^{b+1} 0^{b+1} \circ 0^{a+2(c-b-1)} 1^{d-(a+c-b-1)}.
\]
Therefore,
$\dist(\f^{-1}(w \circ 1),\f^{-1}(w' \circ 1)) \le 3$.

\paragraph{Case 3 ($\boldsymbol{b > c}$).}
In this case,
$w = 0^a 1^{b-c-1} \circ 1^{c+1} 0^{c+1} \circ 1^d$
and
$w' = 0^a 1^{b-c+1} \circ 1^{c} 0^{c} \circ 1^d$.
After the marking stage we get
$\markhat(w) = \und{0}^a \und{1}^{b-c-1} \circ \hat{1}^{c+1} \hat{0}^{c+1} \circ \und{1}^d$
and
$\markhat(w') = \und{0}^a \und{1}^{b-c+1} \circ \hat{1}^{c} \hat{0}^{c} \circ \und{1}^d$.
\paragraph{Subcase 3.1 ($\boldsymbol{a < b-c}$)}
\[
    \f^{-1}(w \circ 1) = 0^{2a} 1^{b-c-a-1} \circ 1^{c+1} 0^{c+1} \circ 1^d
\]
and
\[
    \f^{-1}(w' \circ 1) = 0^{2a} 1^{b-c-a+1} \circ 1^{c} 0^{c} \circ 1^d.
\]
Thus, $\dist(\f^{-1}(w \circ 1),\f^{-1}(w' \circ 1)) \le 1$.

\paragraph{Subcase 3.2 ($\boldsymbol{a = b-c}$)}
\[
    \f^{-1}(w \circ 1) = 0^{2a-1} \circ 1^{c+1} 0^{c+1} \circ 0 1^{d-1}
\]
and
\[
    \f^{-1}(w' \circ 1) = 0^{2a} 1 \circ 1^{c} 0^{c} \circ 1^{d}.
\]
Therefore, in this case, $\dist(\f^{-1}(w \circ 1),\f^{-1}(w' \circ 1)) \le 3$.

\paragraph{Subcase 3.3 ($\boldsymbol{a > b-c}$)}
\[
    \f^{-1}(w \circ 1) = 0^{a+b-c-1} \circ 1^{c+1} 0^{c+1} \circ 0^{a-b+c+1} 1^{d-(a-b+c+1)}
\]
and
\[
    \f^{-1}(w' \circ 1) = 0^{a+b-c+1} \circ 1^{c} 0^{c} \circ 0^{a-b+c-1} 1^{d-(a-b+c-1)}.
\]
Thus, $\dist(\f^{-1}(w \circ 1),\f^{-1}(w' \circ 1)) \leq 5$.
%

This completes the proof of Claim~\ref{claim:case analysis2}.
\end{proof}

\section{The Mapping $\boldsymbol{\f}$ is Computable in DLOGTIME-uniform $\TC{0}$}\label{sec:complexity}
In this section we analyze the complexity of the bijection $\f$ described in the
proof of Theorem~\ref{thm:main}. We first claim that each output bit of $\f$ (and of $\f^{-1}$) can be computed in DLOGTIME-uniform $\TC{0}$. In Proposition~\ref{prop:maj leq f_1} and in the remark following it, we show that indeed $\TC{0}$ is the ``correct'' class for $\f$.
\paragraph{Proposition~\ref{prop:TC0} (restated).}
{\em
    The bijections $\f$ and $\f^{-1}$ are computable in DLOGTIME-uniform $\TC{0}$.
}

\medskip
\noindent
We prove the proposition only for $\f$. The proof for $\f^{-1}$ is very similar,
and we omit it.

\begin{proof}
We divide the proof into two steps. First we show that the marking stage can
be implemented in $\TC{0}$. Then, given the marking of an input, we show how
to compute $\f$ in $\TC{0}$. Both steps can be easily seen to be DLOGTIME-uniform.

Throughout the proof, the output
of the marking stage is represented by two bits for each coordinate, encoding
a symbol in $\{0,1,\hat{0}, \hat{1}\}$, where one bit represents the
Boolean symbol, and the other indicates whether the coordinate is marked or not.

\paragraph{Implementing the marking stage.}
Let $x \in \cc$. In order to implement the marking stage in $\TC{0}$, we observe that the $i^{\text{th}}$ coordinate in $x$ is marked if and only if there are coordinates
$s_i \leq i \leq e_i$ such that
\begin{enumerate}
    \item The number of ones in $x_{[s_i,\dots,e_i]}$ is equal to
    the number of zeros in $x_{[s_i,,\dots,e_i]}$.
    \item For every $k \in \{s_i,,\ldots,e_i\}$, the number of ones in
    the prefix $x_{[s_i,\dots,k]}$ is greater or equal to the number
    of zeros in $x_{[s_i,\dots,k]}$.
\end{enumerate}
Fix $i \in [n]$ and fix $s_i, e_i \in [n]$ such that $s_i \le i \le e_i$.
Thinking of the bit 1 as '(' and 0 as ')', the above two conditions are
equivalent to checking whether the string $x_{[s_i,\ldots,e_i]}$ of
parentheses is balanced, or in other words, deciding whether
$x_{[s_i,\ldots,e_i]}$ is in Dyck language. It is well-known that
Dyck language can be recognized in $\TC{0}$~\cite{L77}.
In fact, it is not hard to show that deciding whether a string of length $m$
is in Dyck language can be carried out by a DLOGTIME-uniform $\TC{0}$ circuit with size $O(m)$.

Now, for each $i \in [n]$, we go over all choices for $s_i,e_i$ in parallel, and take the $\OR$ of the $O(n^2)$ results. Thus, for each $i \in [n]$, there is a DLOGTIME-uniform $\TC{0}$ circuit with size $O(n^3)$ that decides whether the $i^{\text{th}}$ coordinate is marked or not.

\paragraph{Computing $\boldsymbol{\f}(x)$ from $\markhat(x)$:}

In order to compute $\f(x)$, let $\markhat(x) \in \markedcube$
be the marking of $x$. Since every marked coordinate will remain unchanged,
we need to consider only of the unmarked coordinates. Recall also that the
unmarked bits form a sequence of zeros followed by a sequence of ones.
That is, if we ignore the marked coordinates, then we get a string of the form
$0^a1^b$ for some $a=a(x), b=b(x)$, and the output should be
$0^{\floor{\frac{a}{2}}}1^{\ceil{\frac{a}{2}}+b} \circ \even(a)$ (recall that $\even(a) = 1$ if $a$ is even, and $\even(a) = 0$ otherwise).
This can be implemented as follows.
\begin{enumerate}
  \item Let $a$ be the number of unmarked zeros in $\markhat(x)$.
  \item For each $i \in [n]$, let $u_i = u_i(x)$ be the number of unmarked
  coordinates among $\{1,\dots,i\}$.
  \item For all unmarked coordinates $i\in [n]$, if $2u_i<a$, then set the $i^{\text{th}}$ bit of the output to 0. Otherwise, set the $i^{\text{th}}$ bit to $1$.
  \item
  Set the $(n+1)^{\text{st}}$ bit of the output to $\even(a)$.

\end{enumerate}
\noindent
It is easy to verify that given $\markhat(x)$, checking whether the inequality $2u_i<a$ holds
can be done in $\TC{0}$, and so the entire second step can be carried out by a $\TC{0}$ circuit.
\end{proof}
\noindent
We remark that the bijection $\f$ cannot be computed in $\AC{0}$.
For example, we prove that the first output bit of $\f$ cannot be computed in $\AC{0}$.

\begin{proposition}\label{prop:maj leq f_1}
    The function $\maj$ is $\NC{0}$-reducible to $\f_1$,
    i.e., $\maj \leq_{\NC{0}} \f_1$.
    In particular  $\f_1 \notin \AC{0}$.
\end{proposition}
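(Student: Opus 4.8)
The plan is to read an explicit formula for $\f_1$ out of the description of $\f$ in Section~\ref{sec:f}, and then exhibit a \emph{projection} from $\maj$ to $\f_1$. Identify the bit $1$ with an open parenthesis and $0$ with a close parenthesis, and for $x \in \cc$ let $a(x)$ denote the number of unmatched close parentheses in $x$; by the marking algorithm (and the Dyck-language characterization of the marked coordinates recalled in Section~\ref{sec:complexity}) this is exactly the number of unmarked zeros in $\markhat(x)$. The claim I would first establish is
\[
\f_1(x) = 0 \qquad\Longleftrightarrow\qquad x_1 = 0 \ \text{ and }\ a(x) \geq 2 .
\]
Indeed, if coordinate $1$ is marked then $\f_1(x) = x_1$ by Claim~\ref{claim:mark to infl}; and $x_1 = 0$ forces coordinate $1$ to be unmarked, since the leftmost symbol can never play the role of the $\hat 0$ in a marked $\hat 1\hat 0$ pair. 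When coordinate $1$ is unmarked it is the first unmarked coordinate, and since the unmarked bits read $0^a1^b$, the rule defining $\f$ (keep the first $\floor{a/2}$ unmarked zeros intact and set the remaining unmarked zeros to $1$) puts a $0$ at coordinate $1$ precisely when $\floor{a/2} \geq 1$, i.e.\ $a(x) \geq 2$, and in that case $x_1 = 0$. Specializing, for $y$ of any length one gets $a(0\circ y) = 1 + a(y)$, hence $\f_1(0\circ y) = 0$ iff $y$ contains at least one unmatched close parenthesis.

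Next I would set up the reduction. Given $z \in \{0,1\}^m$ with $m$ odd, let
\[
h(z) \;=\; 0 \circ 1^{m} \circ z \circ 0^{m} \;\in\; \{0,1\}^{3m+1},
\]
noting that $3m+1$ is even and that every output bit of $h$ is either a constant or a single variable $z_i$, so $h$ is a projection and in particular an $\NC{0}$ map. Write $q$ for the number of ones in $z$ and $p = m-q$ for the number of zeros. The purpose of the sandwich $1^m\cdots 0^m$ is to make $a(1^m\circ z\circ 0^m)$ insensitive to the order of the bits of $z$: while scanning $1^m\circ z$ the stack starts at height $m \geq p$ and never underflows, so every close parenthesis of $z$ is matched and after $1^m\circ z$ the stack holds exactly $m+q-p$ open parentheses; the trailing block $0^m$ then matches $\min(m,\,m+q-p)$ of them, leaving $a(1^m\circ z\circ 0^m) = \max(0,\,p-q)$ unmatched closes. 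Combining with the previous paragraph, $\f_1(h(z)) = 0$ iff $p-q\geq 1$, i.e.\ iff $z$ has strictly more zeros than ones, which (as $m$ is odd) is exactly $\maj(z) = 0$. Hence $\maj(z) = \f_1(h(z))$ for every $z$, so $\maj \leq_{\NC{0}} \f_1$ via a projection; since $\maj \notin \AC{0}$ and $\AC{0}$ is closed under $\NC{0}$ (indeed projection) reductions, $\f_1 \notin \AC{0}$. Restricting to odd $m$ loses nothing, since majority on odd-length inputs is already not in $\AC{0}$.

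The only step with real content is the first one: reading off the exact value of the first output bit from the chain-climbing description and handling the boundary cases $a\in\{0,1\}$ as well as the implication $x_1 = 0 \Rightarrow$ coordinate $1$ unmarked. The sandwich trick in the second step is what upgrades this to a genuinely \emph{local} reduction: the naive encoding $z\mapsto z$ would make $\f_1(0\circ z)$ test whether some prefix of $z$ has more zeros than ones, which is order-dependent and not majority, whereas padding with $1^m$ on the left and $0^m$ on the right converts that prefix condition into the global count comparison. An alternative, if one would rather not spell out the $\f_1$ formula, is to quote verbatim the explicit data in the proof of Proposition~\ref{prop:TC0} (the set of marked coordinates, the count $a$ of unmarked zeros, and the final output rule), from which the same equivalence for $\f_1$ follows immediately.
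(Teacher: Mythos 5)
Your proof is correct and follows essentially the same route as the paper: both first establish that $\f_1(x)=0$ iff $x_1=0$ and $\markhat(x)$ contains at least two unmarked zeros, and then exhibit a projection from $\maj$ on $m$ bits into $\{0,1\}^{3m+1}$ via a padded encoding. The only difference is the choice of gadget --- you sandwich $z$ between $1^m$ and $0^m$, whereas the paper replaces each bit of $z$ by a two-bit block and pads with $1^m$ on the left --- and your version yields $\f_1(h(z))=\maj(z)$ on the nose, which is if anything slightly cleaner.
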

\begin{proof}
We first note that $\f_1(x)=0$ if and
only if $x_1 = 0$ and $\markhat(x)$ contains at least two unmarked zeros.
For odd $n$, we construct a reduction $r \colon \cc \to \cc[3n+1]$ that
for input $x \in \cc$
outputs a string $r(x) \in \cc[3n+1]$ as follows.
Let $x' \in \cc[2n]$ be the string obtained from $x$ by replacing each
0 of $x$ with $10$, and by replacing each 1 of $x$ with $00$.
Define $r(x) = 0 \circ 1^{n}\circ x'$.
For example, if $x = 01101$, then $x' = 10 \circ 00 \circ 00 \circ 10 \circ 00$,
and $r(x) = 0 \circ 1^{5} \circ 10 \circ 00 \circ 00 \circ 10 \circ 00$.
By the definition of $r$, it is clear that each bit of $r(x)$ depends on at most one bit of $x$.
It is straightforward to check that $\maj(x) = \f_1(r(x))$, and the assertion, then, follows.
\end{proof}

\begin{remark}\label{rem:maj leq f_i}
    Note that the redution above also gives $\maj \leq_{\NC{0}} \f_{n+1}$.
    A similar proof also shows that $\maj \leq_{\NC{0}} \f_i$
    for all $i \in [n+1]$.
\end{remark}

\section{All But the Last Output Bit Depend Essentially on a Single Input Bit}\label{sec:f_i = n_i}

In this section we prove Proposition~\ref{prop:f_i = x_i}. We recall it
here for convenience.
\paragraph{Proposition~\ref{prop:f_i = x_i} (restated).}
{\em
    For all $i \in [n]$ it holds that
    \[
        \Pr_{x}[\f_i(x) = x_i] > 1 - O(1/\sqrt{n}).
    \]
}

\noindent
Before proving the proposition, we need to further study the structure of
the De~Bruijn-Tengbergen-Kruyswijk partition described in Section~\ref{sec:the partition}.
We start with the following claim.

\begin{claim}
    Let $n$ be an integer, and let $\P$ be a partition of $\cc$
    into symmetric chains. For every $1 \leq t \leq n+1$,
    let $M_t$ be the number of symmetric chains of length $t$ in $\P$.
    Then,
\[
    M_t = \left\{
      \begin{array}{ll}
        {n \choose \frac{n-t+1}{2}} - {n \choose \frac{n-t-1}{2}} & \hbox{\quad $t \not\equiv n \pmod 2$;} \\
        0 & \hbox{\quad otherwise.}
      \end{array}
    \right.
  \]
\end{claim}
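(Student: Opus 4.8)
The plan is to show that the count $M_t$ is independent of which symmetric chain decomposition $\mathcal{P}$ is used — it is forced by the level sizes $\binom{n}{0}, \binom{n}{1}, \dots, \binom{n}{n}$ alone. The key observation is that a symmetric chain of length $t$ (with $t \not\equiv n \pmod 2$, which is the only possibility for a symmetric chain in $\cc$) passes through exactly the levels $\frac{n-t+1}{2}, \frac{n-t+1}{2}+1, \dots, \frac{n+t-1}{2}$; in particular it contributes exactly one vertex to each of these levels and no vertex to any level outside this symmetric window. So a chain of length $t$ meets level $\ell$ if and only if $\frac{n-t+1}{2} \le \ell \le \frac{n+t-1}{2}$, equivalently if and only if its length satisfies $t \ge |n - 2\ell| + 1$.

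First I would set up the level-counting identity. Fix a level $\ell$ with $0 \le \ell \le n/2$ (the case $\ell > n/2$ is symmetric, or one reduces to it by the symmetry $\ell \leftrightarrow n-\ell$). The number of chains that pass through level $\ell$ equals the number of vertices of weight $\ell$, which is $\binom{n}{\ell}$, since each vertex lies in exactly one chain of $\mathcal{P}$ and a chain meets each level it spans exactly once. A chain passes through level $\ell$ precisely when its length $t$ is at least $n - 2\ell + 1$. Therefore
\[
\binom{n}{\ell} = \sum_{\substack{t \ge n-2\ell+1 \\ t \equiv n+1 \pmod 2}} M_t .
\]
Next I would solve this triangular system. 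Writing $\ell$ and $\ell - 1$ (both at most $n/2$) and subtracting the two instances of the identity, the sum on the right telescopes: the chains counted for $\ell$ but not for $\ell - 1$ are exactly those of length $t = n - 2\ell + 1$ and $t = n - 2\ell + 2$, but only one of these two has the correct parity $t \equiv n+1 \pmod 2$, namely $t = n - 2\ell + 1$. Hence
\[
\binom{n}{\ell} - \binom{n}{\ell - 1} = M_{n - 2\ell + 1}.
\]
Setting $t = n - 2\ell + 1$, so that $\ell = \frac{n - t + 1}{2}$ and $\ell - 1 = \frac{n - t - 1}{2}$, gives exactly $M_t = \binom{n}{\frac{n-t+1}{2}} - \binom{n}{\frac{n-t-1}{2}}$ for $t \not\equiv n \pmod 2$, and $M_t = 0$ otherwise since no symmetric chain in $\cc$ can have length congruent to $n$ modulo $2$.

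I do not expect a serious obstacle here; this is a standard counting argument. The one point requiring a little care is the parity bookkeeping: making sure that exactly one of the two ``newly included'' lengths $t = n-2\ell+1$ and $t = n-2\ell+2$ survives the parity constraint, and checking the boundary case $\ell = n/2$ (where $t = 1$, giving $M_1 = \binom{n}{n/2} - \binom{n}{n/2 - 1}$, the number of length-one chains, i.e. vertices of weight $n/2$ that are their own chain) and $\ell = 0$ (the single chain through the all-zeros vertex). It is also worth remarking explicitly, for use later, that this computation shows $M_t$ depends only on $n$ and not on $\mathcal{P}$, which is why we may freely work with the De~Bruijn--Tengbergen--Kruyswijk partition in the sequel.
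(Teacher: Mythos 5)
Your proof is correct and is essentially the paper's argument: both rest on the observation that the chains partition each weight level, that a chain of length $t$ meets exactly the levels from $\frac{n-t+1}{2}$ to $\frac{n+t-1}{2}$ (once each), and that solving the resulting triangular relations between $\binom{n}{\ell}$ and the $M_t$'s yields the difference of binomial coefficients. The paper organizes this as a backward induction on $t$ while you subtract consecutive level equations directly, but the underlying counting is the same.
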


\begin{proof}
    Note first that if $C = \{ c_k, c_{k+1}, \ldots, c_{n-k} \}$
    is a symmetric chain, then its length is $n-2k+1$.
    In particular, this implies that there are no symmetric chains
    of length $t$ where $t \equiv n \pmod 2$, and hence $M_t=0$
    for such $t$.

    Next, we prove the claim for $t \not\equiv n \pmod 2$.
    This is done by backward induction on $t$.
    For $t=n+1$ we clearly have a unique
    symmetric chain starting at $0^n$ and ending at $1^n$,
    and hence $M_{n+1} = 1$, as claimed.

    Before actually doing the induction step, let us consider the next case, namely,
    $t = n-1$. Note that only one of the vertices of Hamming weight $1$
    is contained in the unique chain of length $n+1$, and so, since
    distinct vertices with equal weight are contained in distinct
    symmetric chains, there are $n-1$ chains with bottom vertex of Hamming weight $1$. Therefore $M_{n-1} = n-1$, as claimed.

    For the general induction step, suppose that the claim holds for all
    $t'$ larger than $t$. We prove the assertion for $t \not\equiv n \pmod 2$.
    Every symmetric chain of length $t$ must be of the form
    $C = \{ c_k, c_{k+1}, \ldots, c_{n-k} \}$, where $k = \frac{n-t+1}{2}$.
    Since the chains of length greater than $t$ are disjoint, and each
    contains a vertex with Hamming weight $k$, it follows that the number of vertices
    with Hamming weight $k$ that are contained in chains of length greater than $t$ is
    $\sum_{t'>t} M_{t'} = {n \choose k-1}$.
    The remaining ${n \choose k} - {n \choose k-1}$ vertices must be contained
    in chains of length $t$, and so, since
    distinct vertices of Hamming weight $k$ are contained in distinct
    symmetric chains, it follows that there are ${n \choose k} - {n \choose k-1}$
    chains of length $t$.
\end{proof}
\noindent
The following corollary is immediate from the observation that any
$x \in \cc$ such that $\markhat(x)$ contains exactly $a$ unmarked zeros and
$b$ unmarked ones is contained in a unique chain of length $a+b+1$ in the
De~Bruijn-Tengbergen-Kruyswijk partition.

\begin{corollary}\label{cor:counting 0^a1^b}
    Let $n,a,b \in \N$ such that $a+b \equiv n \pmod 2$,
    and $a+b \leq n$.
    Then,
    \begin{enumerate}
    \item
        The number of $x \in \cc$ such that $\markhat(x)$ contains exactly
        $a$ unmarked zeros and $b$ unmarked ones is
        ${n \choose \frac{n-a-b}{2}} - {n \choose \frac{n-a-b-2}{2}}$.
    \item
        The number of $x \in \cc$ such that $\markhat(x)$ contains
        exactly $a$ unmarked zeros (and any number of unmarked ones)
        is ${n \choose \floor{\frac{n-a}{2}}}$.
    \end{enumerate}
\end{corollary}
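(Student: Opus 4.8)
The plan is to obtain both parts directly from the preceding Claim on the number $M_t$ of length-$t$ symmetric chains, combined with the structural remark stated just before the corollary. First, for part~1, I would make the correspondence between vertices and chains precise. Fix a chain $C_y = \{c_k,\ldots,c_{n-k}\}$ whose encoding $y \in \regcube$ has exactly $m$ symbols $\emptyslot$; then its length is $t = m+1$ and $k = (n-m)/2$. The $\emptyslot$-coordinates of $y$ are precisely the unmarked coordinates of \emph{every} $x \in C_y$, and by the definition of the encoding, $c_i$ is obtained by putting $0$ in the $m-(i-k)$ leftmost $\emptyslot$-positions and $1$ in the remaining $i-k$. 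Hence $c_i$ has exactly $m-(i-k)$ unmarked zeros and $i-k$ unmarked ones, so the profile ``$a$ unmarked zeros and $b$ unmarked ones'' is realized by exactly one vertex of $C_y$, namely $c_{k+b}$, and only when $m = a+b$. Conversely, any such $x$ lies in a chain of length $a+b+1$. Therefore the number of $x \in \cc$ with this profile equals $M_{a+b+1}$, and plugging $t = a+b+1$ into the formula of the Claim — noting that the hypothesis $a+b \equiv n \pmod 2$ is exactly the condition $t \not\equiv n \pmod 2$ that makes $M_t$ nonzero — yields $\binom{n}{(n-a-b)/2} - \binom{n}{(n-a-b-2)/2}$.

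For part~2, I would sum the expression from part~1 over all admissible $b$. (The parity constraint $a+b\equiv n \pmod 2$ and the bound $a+b \le n$ are automatic for any $x\in\cc$, since $\markhat(x)$ marks coordinates in $\hat{1}\hat{0}$ pairs and so has an even number of marked coordinates.) Setting $s = a+b$, the sum runs over $s$ with $s \equiv n \pmod 2$ and $a \le s \le n$; substituting $r = (n-s)/2$ rewrites it as $\sum_{r=0}^{R} \bigl( \binom{n}{r} - \binom{n}{r-1} \bigr)$, where $R$ is the value of $r$ corresponding to the smallest admissible $s$. Checking the two parities of $n-a$ shows that the smallest admissible $s$ is $a$ (if $n-a$ is even) or $a+1$ (if $n-a$ is odd), so in both cases $R = \floor{(n-a)/2}$. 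The sum then telescopes to $\binom{n}{R} - \binom{n}{-1} = \binom{n}{\floor{(n-a)/2}}$, using the convention $\binom{n}{-1}=0$.

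The only real work is the bookkeeping in the first paragraph — unwinding the chain encoding to see that each length-$(a+b+1)$ chain contains exactly one vertex with the prescribed number of unmarked zeros and ones — together with correctly identifying the range of summation for the telescoping in the second paragraph. Neither step presents a genuine obstacle, which is why the corollary can be stated as an immediate consequence of the Claim.
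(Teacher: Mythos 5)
Your proof is correct and follows the same route as the paper: the paper derives the corollary as ``immediate'' from the observation that each $x$ with the profile $(a,b)$ lies in a unique chain of length $a+b+1$ (so the count is $M_{a+b+1}$), and part~2 by summing/telescoping, exactly as you do. You have simply written out the bookkeeping that the paper leaves implicit, and all of it checks out.
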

\noindent
We are now ready to prove Proposition~\ref{prop:f_i = x_i}.
\begin{proof}[Proof of Proposition~\ref{prop:f_i = x_i}]
    Let $x \in \cc$, and let $\markhat(x)$ be its marking.
    Suppose that the unmarked coordinates in $\markhat(x)$ are
    $i_1 < i_2 < \cdots < i_t$, and let $0 \le \ell \le t$ be such that
    $x_{i_1} = \cdots = x_{i_\ell} = 0$ and
    $x_{i_{\ell+1}} = \cdots = x_{i_t} = 1$.
    Note that $\f_i(x) \neq x_i$ if and only if the $i^{\text{th}}$ coordinate
    is unmarked in $\markhat(x)$ and $i = i_j$ for some $j \in \{\floor{\frac{\ell}{2}}+1, \dots, \ell\}$.

    As in the proof of Theorem~\ref{thm:main}, it will be convenient to perform the following partial marking of $x$.
    Let us first perform the marking
    stage on the prefix of $x$ of length $i-1$, and denote the resulting
    string by $s \in \markedcube[i-1]$. Then, perform the marking stage on
    the suffix of $x$ of length $n-i$ and denote the result string by
    $t \in \markedcube[n-i]$.
    Suppose for concreteness that the string $s$ contains $a$ unmarked
    zeros and $b$ unmarked ones, and the string $t$ contains $c$ unmarked zeros
    and $d$ unmarked ones. By the definition of $\f$ we have $\f_i(x) \neq x_i$
    if and only if $x_i = 0$, $b = 0$ and $a \ge c$.
    Therefore, since each bit of $x$ is chosen independently, the resulting partially marked strings $s,t$ and the bit $x_i$ are also independent, and so
    \[
        \Pr[\f_i(x) \neq x_i] = \Pr[x_i = 0] \cdot \Pr[b = 0,a \geq c]
        = \half \cdot \sum_{k=0}^{n-i} \sum_{j=k}^{i} \Pr[a = j,b=0] \Pr[c = k].
    \]
    By Corollary~\ref{cor:counting 0^a1^b}, for $j \not\equiv i \pmod 2$ we have
    \[
        \Pr[a = j,b=0] = \frac1{2^{i-1}} \cdot \left( {i-1 \choose \frac{i-j-1}{2}} - {i-1 \choose \frac{i-j-3}{2}}\right),
    \]
    and
    \[
        \Pr[c = k]
        =
        \frac1{2^{n-i}} \cdot {n-i \choose \floor{\frac{n-i-k}{2}}}.
    \]
    Therefore, for every $k \leq i$ we have
    \[
        \sum_{j=k}^{i} \Pr[a = j,b=0] =
        \frac1{2^{i-1}} \cdot    \!\!\!\! \sum_{\substack{k \leq j \leq i \\ j \not\equiv i\!\!\!\!\pmod 2}}
                \!\!\left({i-1 \choose \frac{i-j-1}{2}} - {i-1 \choose \frac{i-j-3}{2}}\right)
                = \frac1{2^{i-1}} \cdot {i-1 \choose \floor{\frac{i-k-1}{2}}},
    \]
    and so
    \[
        \Pr[\f_i(x) \neq x_i]
        \leq
        \frac{1}{2^{n+1}} \cdot \!\!\sum_{k=0}^{\min(i,n-i)}
                    {n-i \choose \floor{\frac{n-i-k}{2}}} {i-1 \choose \floor{\frac{i-k-1}{2}}}.
    \]
    Let us assume that $i \geq n/2$ (the case of $i < n/2$ is handled similarly).
    Then, using the fact that
    ${i-1 \choose \floor{\frac{i-k-1}{2}}} \leq O(2^i/\sqrt{i})$ for all $k$,
    we have
    \[
        \Pr[\f_i(x) \neq x_i] = O\left(\frac{1}{\sqrt{i}}\right) \cdot
            \frac{1}{2^{n-i}} \cdot \sum_{k=0}^{n-i} {n-i \choose \floor{\frac{n-i-k}{2}}}.
    \]
    By the identity
    \[
    \sum_{k=0}^{n-i} {n-i \choose \floor{\frac{n-i-k}{2}}} =
    \sum_{j=0}^{n-i} {n-i \choose j} = 2^{n-i},
    \]
    we get
    $\Pr[\f_i(x) \neq x_i] = O(1/\sqrt{i})$, and so, since we assumed that
    $i \geq n/2$ we get that $\Pr[\f_i(x) \neq x_i] = O(1/\sqrt{n})$, as required.
\end{proof}

\section{Concluding Remarks and Open Problems}

\subsubsection*{Bi-Lipschitz bijection between balanced halfspaces.}

Let $a_0, \ldots, a_n \in \R$. The halfspace determined by the $a_i$'s is the set of all points $(x_1, \ldots, x_n) \in \{-1,1\}^n$
such that $a_0 + a_1 x_1 + \cdots + a_n x_n \ge 0$.%
\footnote{The $\{-1,1\}^n$ representation of the Boolean cube is more natural in the context of halfspaces.}
A balanced halfspace is a halfspace with $a_0 = 0$. The Boolean cube $\{-1,1\}^n$ embedded in the natural way in $\{-1,1\}^{n+1}$ and
the Hamming ball $\{ x \in \{-1,1\}^{n+1} : x_1 + \cdots + x_{n+1} \ge 0 \}$ are two examples of balanced halfspaces. We showed a bi-Lipschitz bijection between them.
It is therefore natural to ask the following question.
\begin{problem}\label{prob:half spaces}
Is there a bi-Lipschitz bijection between
\emph{any} two balanced halfspaces? Or even a bijection with constant average
stretch from the Boolean cube $\{-1,1\}^n$ to any balanced halfspace in $\{-1,1\}^{n+1}$?
\end{problem}
\noindent
In functions terminology, the Boolean cube $\{-1,1\}^n$ embedded in $\{-1,1\}^{n+1}$ is indicated by the dictator function, while the Hamming ball is indicated by the majority function. Problem~\ref{prob:half spaces} refers more generally to linear threshold functions. One attempt at solving Problem~\ref{prob:half spaces} positively, would be to generalize the partition of De~Bruijn~\etal~to general halfspaces.

Besides being a natural problem, a positive solution to Problem~\ref{prob:half spaces} may have implications to fully polynomial approximation scheme for counting solutions of the $0$-$1$ knapsack problem~\cite{MS04}.

\medskip
\noindent
Another interseting problem, inspired by Corollary~\ref{cor:coarse transitivity},
is the following.
\begin{problem}\label{prob:half spaces coarsely transitive}
Is it true that any halfspace is bi-Lipschitz transitive?
\end{problem}

\subsubsection*{Tightness of the stretch from the Boolean cube to the Hamming ball.}
One may ask whether the constants $4$ and $5$ in Theorem~\ref{thm:main} are tight. By a slight
variation on the proof of Theorem~\ref{thm:main}, we can show that there exists a bijection $\h \colon \cc \to \bb$ with $\maxstretch(\h) \le 3$, improving on Theorem~\ref{thm:main} in this respect. However, the maximum stretch of $\h^{-1}$ is unbounded.

\begin{theorem}\label{thm:maxstretch 3}
For all even integers $n$, define the bijection $\h \colon \cc \to \bb$ as follows.
Let $x \in \cc$, and let $C = \{ c_k, c_{k+1}, \ldots, c_{n-k}\}$ be the symmetric chain from the partition of De Bruijn~\etal, that contains $x$. Let $j$ be the index such that $x = c_j$. Define,
\[
    \h(x) \eqdef \begin{cases}
                    c_{n-j} \circ 1    & j \le n/2; \\
                    c_{j} \circ 0    &  \text{otherwise}.
                \end{cases}
\]
Then, $\maxstretch(\h) = 3$ and $\avgstretch(\h^{-1}) = 2+o(1)$.
\end{theorem}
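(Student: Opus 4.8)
The plan is to mirror the proof of Theorem~\ref{thm:main}, exploiting that $\h$ is a \emph{simpler} map than $\f$. First I would check that $\h$ is a bijection from $\cc$ to $\bb$ by writing down its inverse exactly as in Claim~\ref{claim:f is a bijection}: for $z=x\circ z_{n+1}\in\bb$ with $x=c_j$ in its chain $C=\{c_k,\ldots,c_{n-k}\}$ (so $j\ge n/2$), one has $\h^{-1}(z)=c_{n-j}$ if $z_{n+1}=1$ and $\h^{-1}(z)=c_j=x$ if $z_{n+1}=0$. Next I would observe that the analogue of Claim~\ref{claim:mark to infl} holds verbatim for $\h$: since $\h$, like $\f$, moves $x$ only within its own chain, it fixes every marked coordinate and leaves the unmarked coordinates depending only on the unmarked bits. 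This reduces the computation of $\maxstretch(\h)$ to bounding $\dist(\h(0^a1^b\circ 0\circ 0^c1^d),\h(0^a1^b\circ 1\circ 0^c1^d))$ for all $a,b,c,d\in\N$, just as in Claim~\ref{claim:case analysis1}. The convenient explicit description is that $\h$, restricted to an unmarked block $0^p1^q$, outputs $0^{\min(p,q)}1^{\max(p,q)}$ on those coordinates and appends the bit $\mathbf 1[p\ge q]$.

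With this in hand I would run the same three-way case analysis as in Claim~\ref{claim:case analysis1} (cases $b=c$, $b>c$, $b<c$, the last one splitting according to the size of $d$), marking $0^a1^b0^{c+1}1^d$ and $0^a1^b10^c1^d$ explicitly and then applying the displayed formula. In every case the two resulting strings (of common length $a+b+c+d+2$, now including the appended bit) are seen by inspection to be within Hamming distance $3$; the worst case is of the form $0^d1^A0^B1^{\ast}$ versus $0^d1^{A+1}0^{B+1}1^{\ast}$, which differ in two interior coordinates and one further coordinate. This yields $\maxstretch(\h)\le 3$. For the matching lower bound I would exhibit, for every even $n\ge 4$, an explicit $\cc$-edge of stretch $3$: for $n=4$ the pair $x=1000$, $x+e_2=1100$ works, since $\h$ sends them to $10111$ and $11001$, which are at Hamming distance $3$; inserting marked $10$-blocks extends this to all even $n$, and by the analogue of Claim~\ref{claim:mark to infl} the stretch is unchanged. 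Hence $\maxstretch(\h)=3$.

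For $\avgstretch(\h^{-1})$ I would partition the edges of the subgraph of $\cc[n+1]$ induced by $\bb$ into three types and estimate the total stretch over each, against the denominator $|E(\bb)|=\tfrac{n+1}{2}\big(2^{n}-\binom{n}{n/2}\big)=\tfrac n2 2^{n}(1+o(1))$, which follows from the degree sequence of $\bb$ (weight $>n/2+1$ gives degree $n+1$, weight exactly $n/2+1$ gives degree $n/2$). \emph{Type A} (direction $n+1$): the edge $\{x\circ 0,x\circ 1\}$ exists iff $|x|\ge n/2+1$, and then $\h^{-1}(x\circ 0)=x$ while $\h^{-1}(x\circ 1)$ lies $2|x|-n$ steps away in $x$'s chain, so the total stretch over type-A edges is $\sum_{w>n/2}\binom nw(2w-n)=\Theta(\sqrt n\,2^{n})=o(n2^{n})$. \emph{Type B0} (direction $\le n$, last bit $0$): $\h^{-1}$ is the identity on last-bit-$0$ vertices, so each such edge contributes exactly $1$, and there are $\tfrac n2\big(2^{n-1}-\binom n{n/2}\big)=\tfrac n4 2^{n}(1+o(1))$ of them. \emph{Type B1} (direction $\le n$, last bit $1$): there are again $\tfrac n4 2^{n}(1+o(1))$ of these, and here $\h^{-1}$ acts on a last-bit-$1$ vertex by the ``reflection'' $0^p1^q\mapsto 0^q1^p$ on the unmarked coordinates; running the reduction of Claim~\ref{claim:case analysis2} shows the stretch of each type-B1 edge is at most a constant (say $5$) and that for a $1-o(1)$ fraction of them it is exactly $3$ (the two images differ in two interior coordinates and one boundary coordinate, e.g.\ in the $b>c$ regime). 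Summing, the total stretch is $\tfrac n4 2^{n}\cdot 1+\tfrac n4 2^{n}\cdot 3+o(n2^{n})=n2^{n}(1+o(1))=2|E(\bb)|(1+o(1))$, so $\avgstretch(\h^{-1})=2+o(1)$.

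The main obstacle is this last point: bounding the type-B1 stretch by a constant is the same kind of tedious case analysis as Claim~\ref{claim:case analysis2}, but to pin the average at $2+o(1)$ one must also check that the stretch is \emph{generically} equal to $3$ --- that the non-generic configurations (degenerate blocks, or parity coincidences forcing stretch $1$ or $2$) together with the type-A edges contribute only an $o(n2^{n})$ fraction of the total. Establishing this requires a careful (but elementary) counting of how the reduced parameters $(a,b,c,d)$, and in particular the balance condition needed for $x\circ 1\in\bb$, distribute over a uniformly random type-B1 edge, using Corollary~\ref{cor:counting 0^a1^b}.
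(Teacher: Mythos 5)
Your proposal is correct and is essentially the argument the paper intends: the paper omits the proof of Theorem~\ref{thm:maxstretch 3} entirely, stating only that it is similar to that of Theorem~\ref{thm:main}, and your outline --- the marking reduction, the case analysis on the unmarked blocks $0^a1^b\circ\beta\circ 0^c1^d$, an explicit stretch-$3$ edge for the lower bound, and the three-way edge decomposition of $\bb$ for $\avgstretch(\h^{-1})$ --- is the natural instantiation of that plan. The key quantitative claims all check out: the block formula $0^p1^q\mapsto 0^{\min(p,q)}1^{\max(p,q)}$ with appended bit $\mathbf{1}[p\ge q]$, the $n=4$ witness $(1000,1100)\mapsto(10111,11001)$ at distance $3$, the edge counts $|E(\bb)|=\tfrac{n+1}{2}\bigl(2^n-\binom{n}{n/2}\bigr)$ and $\tfrac{n}{4}2^n(1+o(1))$ for each of the two bulk types, and the fact that generic type-B1 edges have stretch exactly $3$ while degenerate configurations (e.g. $c=0$ in the $b>c$ regime, which give stretch $1$) occur with frequency $o(1)$; you also correctly flag that a uniform constant bound on the type-B1 stretch is needed so that the $o(1)$ exceptional fraction cannot contaminate the average.
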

\noindent
The proof of Theorem~\ref{thm:maxstretch 3} is similar to the proof of Theorem~\ref{thm:main}, and thus we omit it.
One can easily see that any bijection $f \colon \cc \to \bb$ has maximum stretch at least $2$.
Indeed, let $y = f(x) \in \bb$ be a point with Hamming weight $n/2+1$.
Then $y$ has only $n/2$ neighbors in $\bb$, which cannot accommodate all $n$ neighbors of $x \in \cc$.
We do not know whether the stretch $3$ of $\h$ in Theorem~\ref{thm:maxstretch 3} is tight or not, and leave it as an open problem.
What is the smallest possible stretch of a bijection from $\bb$ to $\cc$? Are the constants $4$ and $5$ optimal if one considers only bi-Lipschitz bijections? Is the constant $20$ in Corollary~\ref{cor:coarse transitivity} optimal?

\subsubsection*{Lower bounds on average and maximum stretch}

\begin{problem}\label{problem:unbounded stretch}
Exhibit an explicit subset $A \subset \cc[n+1]$ of density $1/2$ such that
any bijection $f \colon \cc \to A$ has $\avgstretch(f) = \omega(1)$,
or prove that no such subset exists.
\end{problem}
\noindent
As a concrete candidate, we suggest to consider sets $A = \{x \colon f(x)=1 \}$,
where $f$ is a monotone noise-sensitive function
(e.g., {\sf Tribes}\footnote{We note that {\sf Tribes} has density close to $1/2$.} or {\sf Recursive-Majority-of-Three}).
A sufficiently strong positive answer to this question would imply
a lower bound for sampling the uniform distribution on $A$ by low-level
complexity classes.

For maximum stretch, we remark that it is easy to prove a lower bound of $\Omega(\sqrt{n})$.  Let
\begin{eqnarray*}
\mathcal{B}_0 &=& \{ x \in \cc[n+1] \colon |x| \le (n/2)-c\sqrt{n} \}, \\
\mathcal{B}_1 &=& \{ x \in \cc[n+1] \colon |x| \ge (n/2)+c\sqrt{n} \}
\end{eqnarray*}
be two diametrically opposed Hamming balls ($\mathcal{B}_0$ is centered at $0^{n+1}$ and $\mathcal{B}_1$ at $1^{n+1}$), where $c > 0$ is a universal constant chosen so that each set will have density $1/4$ in $\cc[n+1]$.  Let $A = \mathcal{B}_0 \cup \mathcal{B}_1$ be their disjoint union.
It is easy to see that any bijection $f \colon \cc[n] \to A$ must have $\maxstretch(f) = \Omega(\sqrt{n})$.
Indeed, since $f^{-1}(\mathcal{B}_0)$ and $f^{-1}(\mathcal{B}_1)$ partition $\cc[n]$,
we may pick an arbitrary edge $(x,y) \in \cc[n]$ with $x\in f^{-1}(\mathcal{B}_0)$
and $y \in f^{-1}(\mathcal{B}_1)$, and observe that $\dist(f(x),f(y)) \ge 2c\sqrt{n}$.
We leave the question of improving this lower bound, or showing that $O(\sqrt{n})$ is tight, as an open problem.

\subsubsection*{Bijections from the Gale-Shapley algorithm for the stable marriage problem}
Let $A,B$ be two subsets of $\cc[n+1]$ with density $1/2$. Consider the
Gale-Shapley algorithm for the stable marriage problem, where each vertex
$v \in A$ ranks all the vertices in $B$ according to their distance to $v$
(breaking ties according some rule).
What can be said about the average stretch of the bijection obtained from
this algorithm?
Two interesting settings are (1) $A = \cc, B=\bb$ and (2) $A,B$ are random subsets of $\cc$ of density $1/2$.
For related work in this direction see Holroyd~\cite{Hol}. Another natural bijection to consider, suggested to us by Avishay Tal, is the one induced by the Hungarian method for the assignment problem~\cite{K55}.

\section*{Acknowledgement}\label{sec:ack}
We thank Li-Yang Tan for introducing us Problem~\ref{prob:dict to maj},
and for helpful discussions.
We also thank Ehud Friedgut for suggesting to use the
De~Bruijn--Tengbergen--Kruyswijk partition, which turned
out to be the key step in the proof of Theorem~\ref{thm:main}. Itai Benjamini would also like to thank Microsoft Research New England,
where  this research was started.

\bibliographystyle{alpha}
\bibliography{bibliography}

\end{document}